\def\Hk1{\mathrm{H}^{k+1}}
\def\HH1{\mathrm{H}^1}
\def\Hexpo{\mathrm{H}^}
\def\L2{\mathrm{L}^2}  
\def\LL{\mathrm{L}^}  
\def\Linf{\mathrm{L}^\infty} 
\def\H2{\mathrm{H}^2}
\def\c{\mathcal{C}^} 
\def\P{\mathbb{P}^}
\def\omgam{(\Omega,\Gamma)}
\def\Vh{\mathbb{V}_h}
\def\Vhlifte{\mathbb{V}_h^\ell}
\def\d{\mathrm{d}}
\def\hessienne{\mathcal{H}}
\def\nn{\boldsymbol{\mathrm{n}}}
\def\nnhr{\boldsymbol{\mathrm{n}_{hr}}}
\def\nt{\nabla_\Gamma}
\def\na{\nabla}
\def\div{\mathrm{div}}
\def\dist{\mathrm{dist}}
\def\diff{\mathrm{D}}
\def\r{^{(r)}}
\def\omhh{\Omega_h}
\def\omh1{\Omega_h^{(1)}}
\def\omhr{\Omega_h^{(r)}}
\def\ghh{\Gamma_h}
\def\gh1{\Gamma_h^{(1)}}
\def\ghr{\Gamma_h^{(r)}}
\def\ft{F_T}
\def\fte{F_T^{(e)}}
\def\ftr{F_T^{(r)}}
\def\ftre{F_{T^{(r)}}^{(e)}}
\def\Ghr{G_h^{(r)}}
\def\Ahell{A_h^\ell}
\def\ahell{a_h^\ell}
\def\Jb{J_b}
\def\Jh{J_h}
\def\Jblifte{J_b^\ell}
\def\Jhlifte{J_h^\ell}
\def\tref{\hat{T}}
\def\trefminissigma{\tref\backslash\hat{\sigma}}
\def\hatsigma{\hat{\sigma}}
\def\te{{T}^{(e)}}
\def\tdeux{{T}^{(2)}}
\def\tr{{T}^{(r)}}
\def\tauh{\mathcal{T}_h^{(1)}}
\def\taur{\mathcal{T}_h^{(r)}}
\def\taue{\mathcal{T}_h^{(e)}}
\def\lambdaetoile{\lambda^*}
\def\hatx{\hat{x}}
\def\haty{\hat{y}}
\def\hatv{\hat{v}}
\def\rhotr{\rho_{\tr}}
\newcommand{\fonction}[5]{\begin{array}[t]{lrcl}#1 :&#2 &\longrightarrow &#3\\&#4& \longmapsto &#5 \end{array}}
\newtheorem{theorem}{Theorem}
\numberwithin{theorem}{section}
\newtheorem{lem}[theorem]{Lemma}
\newtheorem{corollary}[theorem]{Corollary}
\newtheorem{proposition}[theorem]{Proposition}
\newtheorem{definition}[theorem]{Definition}
\newtheorem{remark}[theorem]{Remark}
\def\G{\mathcal{G}_h^{(r)}}
\def\Ghdeux{{G}_h^{(2)}}
\def\R{\mathbb{R}}      
\def\N{\mathbb{N}}  
\def\I{\mathrm{Id}}
\def\transpose{^\mathsf{T}}
\def\Ih1{\mathcal{I}^{(1)}}
\def\Ihr{\mathcal{I}^{(r)}}
\def\Ihlifte{\mathcal{I}^\ell}
\begin{document}

\title{A priori error estimates of a Poisson equation with Ventcel boundary conditions on curved meshes}
\author{Fabien Caubet\footnote{Universit\'e de Pau et des Pays de l'Adour, E2S UPPA, CNRS, LMAP, UMR 5142, 64000 Pau, France. \texttt{fabien.caubet@univ-pau.fr}}, 
Joyce Ghantous\footnote{Universit\'e de Pau et des Pays de l'Adour, E2S UPPA, CNRS, LMAP, UMR 5142, 64000 Pau, France. \texttt{joyce.ghantous@univ-pau.fr}},
 Charles Pierre\footnote{Universit\'e de Pau et des Pays de l'Adour, E2S UPPA, CNRS, LMAP, UMR 5142, 64000 Pau, France. \texttt{charles.pierre@univ-pau.fr}}
}
\date{ }

\maketitle

\begin{abstract}
In this work is considered {an elliptic problem}, referred to as the \textit{Ventcel problem}, involving a second order term on the domain boundary (the Laplace-Beltrami operator). A variational formulation of the Ventcel problem is studied, leading to a finite element discretization. The focus is on the construction of high order curved meshes for the discretization of the physical domain and on the definition of the lift operator, which is aimed to transform a function defined on the mesh domain into a function defined on the physical one. This \textit{lift} is defined in a way as to satisfy adapted properties on the boundary, relatively to the trace operator. The Ventcel problem approximation is investigated both  in terms of geometrical error and of finite element approximation error. Error estimates are obtained both in terms of the  mesh order $r\ge 1$ and to the finite element degree $k\ge 1$, whereas such estimates usually have been considered in the isoparametric case so far, involving a single parameter $k=r$. The numerical experiments we led, both in dimension 2 and 3, allow us to validate the results obtained and proved on the \textit{a priori} error estimates depending on the two parameters $k$ and~$r$. A numerical comparison is made between the errors using the former lift definition and the lift defined in this work establishing an improvement in the convergence rate of the error in the latter case.
\end{abstract}

\textbf{{keywords}}: Laplace-Beltrami operator, Ventcel boundary condition, finite element method, high order meshes, geometric error, \textit{a priori} error estimates. \medskip

\textbf{MSCcodes}: 
74S05, 65N15, 65N30, 65G99.

\section{Introduction}
\paragraph{Motivations.} 
In various situations, we have to numerically solve a Partial Differential Equation (PDE), typically with a finite element method, on smooth geometry. A key point is to obtain an estimation of the error produced while approximating the solution~$u$ of the problem, by its finite element approximation~$u_h$ while taking into account the error produced while approximating the physical domain $\Omega$ by the mesh domain $\Omega_h$. \medskip

This typically is the case in this work, which is aimed at certain industrial applications (in particular in the context of the project RODAM\footnote{\textit{Robust Optimal Design under Additive Manufacturing constraints}: \url{https://lma-umr5142.univ-pau.fr/en/scientific-activities/scientific-challenges/rodam.html}.}) where the  object or material under consideration is surrounded by a thin layer with different properties, typically a corrosion layer. Another application is also observed in aeroacoustic, where the so-called Ingard-Myers boundary conditions are used to model the presence of a liner located on the surface of a duct (see \cite{aeroacoustics}). The presence of this layer causes some difficulties while discretizing the domain and numerically solving the problem. To overcome this problem, a classical approach consists in replacing the thin layer by a model with artificial boundary conditions. When considering diffusivity properties, this leads to introduce second-order boundary conditions, the so-called {\it Ventcel boundary conditions}, as analysed in \cite{Gvial}. In the second half of the 1950's, these conditions were introduced in the pioneering works of Ventcel \cite{Ventcel-1956,Ventcel-1959}. The price to pay is to impose the smoothness of the domain in order to guaranty the well posedness of the second order boundary condition, which implies that the physical domain cannot be fitted by a polygonal mesh. \medskip

To sum up, the main focus of this paper is to consider the numerical resolution of a (scalar) PDE equipped with higher order boundary conditions, which are the Ventcel boundary conditions, to after that assess the {\it a priori} error produced by a finite element approximation, on higher order meshes.

\medskip

\paragraph{The Ventcel problem and its approximation.} Let $\Omega$ be a nonempty bounded connected domain in $\R^{d}$, $d=2$, $3$, with a smooth boundary $\Gamma := \partial\Omega$. Considering {the source terms $f$ and $g$}, as well as some given constants $\kappa \ge 0$, $\alpha,\,\beta>0$, the Ventcel problem that we will focus on is the following:
\begin{equation} \label{1}
  \left\{
      \begin{array}{rcll}
       -\Delta u + \kappa u &=& f &   \text{ in } \Omega,\\
        -\beta \Delta_{\Gamma} u + \partial_{\mathrm{n}} u + \alpha u &=& g &  \text{ on } \Gamma,\\
      \end{array}
    \right.
\end{equation}
where $\nn$ denotes the external unit normal to $\Gamma$, $\partial_{\mathrm{n}} u$ the normal derivative of $u$ along~$\Gamma$ and~$\Delta_\Gamma$ the Laplace-Beltrami operator. 
\medskip

The main objective of this work is  to do an error analysis of the Ventcel Problem. To begin with, we need to point out that the domain $\Omega$ is required to be smooth due to the presence of second order boundary conditions. Actually, Ventcel boundary conditions would not make sense on polygonal domains. Thus, the physical domain~$\Omega$ being non-polygonal can not be exactly fitted by the mesh domain, i.e. $\Omega_h \neq \Omega$. This gap between $\Omega$ and the mesh domain produces a \textit{geometric error}. When using classical meshes made of triangles (affine meshes), this geometric error induces a saturation of the error at low order, independently of the considered finite element order. To overcome this issue, we will resort to curved meshes, following the work of many authors (see, e.g., \cite{PHcia,ciaravtransf,ed,elliott}). Meshes of order $r$ (\textit{i.e.} with elements of polynomial degree $r$) will be considered to improve the asymptotic behavior of the geometric error with respect to the mesh size $h$. Notice that the domain of the mesh of order $r$, denoted~$\omhr$, does not fit the domain~$\Omega$. However, the numerical results are expected to be more accurate for $r\ge 2$ than for standard affine meshes.

\medskip 

A $\P k$-Lagrangian finite element method is used with a degree~$k \ge 1$ to approximate the exact solution $u$ of System~\eqref{1} by a finite element function~$u_h$ defined on the mesh domain $\omhr$. One goal of the present paper is to perform an error analysis both considering the roles of the finite element approximation error, controlled by the parameter $k$,  and the geometric error, controlled by the parameter $r$. We thus consider a non-isoparametric approach, in the sequel of the work of Demlow \textit{et al.} for surface problems as precised later on. Doing so, one can assess which is the optimal degree of the finite element method $k$ to chose depending on the geometrical degree~$r$, in order to minimize the total error. Notice that an \textit{isoparametric approach}, that is taking $k=r$, is treated in~\cite{ed,elliott,Balazs}, for similar problems. 

\medskip

Since $\omhr \neq \Omega$, in order to compare the numerical solution $u_h$ defined on $\omhr$ to the exact solution $u$ defined on $\Omega$ and to obtain \textit{a priori} error estimations, the notion of \textit{lifting} a function from a domain onto another domain needs to be introduced. The \textit{lift functional} was firstly introduced in the 1970s by many authors (see, e.g.,~\cite{dubois,Lenoir1986,nedelec,scott-2}). Among them, let us emphasize the lift based on the orthogonal projection onto the boundary $\Gamma$, introduced by Dubois in~\cite{dubois} and further improved in terms of regularity by Elliott \textit{et al.} in~\cite{elliott}. However, the lift defined in \cite{elliott} does not fit the orthogonal projection on the computational domain's boundary. As will be seen in Section~\ref{sec:lift-def-surf-vol}, this condition is essential to guarantee  the theoretical analysis of this problem. In order to address this issue, an alternative definition is introduced in this paper which will be used to perform a numerical study of the computational error of System~\eqref{1}. This modification in the lift definition has a big impact on the error approximation as is observed in the numerical examples in Section~\ref{sec:numerical-ex}. 

\paragraph{Main novelties.} 
The first innovating point presented in this work, is the definition of a new adequate lift satisfying a suitable \textit{trace property}, as developed in Proposition~\ref{prop:trace}. The second novelty in this paper is the {\it a priori} error estimations, which are computed and expressed both in terms of finite element approximation error and of geometrical error, respectively, associated to the finite element degree $k \ge 1$ and to the mesh order $r \ge 1$. This follows the works of Demlow \cite{D4,D1,D2} on surface problems, where he considered a non isoparametric approach with $k \ne r$, in order to do an error analysis. In the existing works such as~\cite{ed}, error estimates of Problem~\eqref{1} were established using the lift defined in \cite{elliott}, while considering an \textit{isoparametric approach} and taking~$k=r$. In~\cite{elliott}, while also taking an \textit{isoparametric approach}, a thorough error analysis is made on a coupled bulk–surface partial differential equation with Ventcel boundary conditions. In~\cite{ventcel1}, the well-posedness and regularity of System~\eqref{1} is rigorously studied. Eventually, this paper also brings to the fore an interesting super convergence property of quadratic meshes, numerically observed both in dimension 2 and 3.

We present the following \textit{a priori} error estimations, which will be explained in details and proved in Section~\ref{ERROR-section}:
$$
  \| u-u_h^\ell \|_{\L2 \omgam } = O ( h^{k+1} + h^{r+1})
\quad  {\rm and } \quad  
  \| u- u_h^\ell \|_{\HH1 \omgam } = O ( h^{k} + h^{r+1/2}),
$$
where $h$ is the mesh size and $u_h^\ell$ denotes the \textit{lift} of $u_h$ (given in Definition~\ref{def:liftvolume}), and~$\L2 \omgam$ and~$\HH1 \omgam$ are Hilbert spaces defined below.

\medskip

\paragraph{Paper organization.} Section \ref{sec:notations_def} contains all the mathematical tools and useful definitions to derive the weak formulation of System~\eqref{1}. Section~\ref{sec:mesh} is devoted to the definition of the high order meshes. In Section \ref{section-lift}, are defined the volume and surface lifts, which are the keystones of this work. A Lagrangian finite element space and discrete formulation of System~\eqref{1} are presented in Section~\ref{FE-section}, alongside their \textit{lifted forms} onto $\Omega$. The \textit{a priori} error analysis is detailed in Section~\ref{ERROR-section}. The paper wraps up in Section~\ref{sec:numerical-ex} with {2D and 3D} numerical experiments studying the method convergence rate dependency on the geometrical order $r$ and on the finite element degree $k$. 
\section{Notations and needed mathematical tools}
\label{sec:notations_def}
Firstly, let us introduce the notations that we adopt in this paper. Throughout this paper,~$\Omega$ is a nonempty bounded connected open subset of $\R^{d}$ $(d=2,3)$ with a smooth (at least $\c2$) boundary~$\Gamma:=\partial{\Omega}$. 
The unit normal to~$\Gamma$ pointing outwards is denoted by~$\nn$ and $\partial_{\mathrm{n}} u$ is a normal derivative of a function $u$.
We denote respectively by $\LL 2(\Omega)$ 
and  $\LL 2(\Gamma)$ 
the usual Lebesgue spaces endowed with their standard norms on $\Omega$ and $\Gamma$.
Moreover, for $k \geq 1$, $\Hk1(\Omega)$ denotes the usual Sobolev space endowed with its standard norm. We also consider the Sobolev spaces~$\Hk1(\Gamma)$ on the boundary as defined e.g. in \cite[\S 2.3]{ventcel1}. It is recalled that the norm on $\Hexpo{1}(\Gamma)$ is: $\|u\|^2_{\Hexpo{1}(\Gamma)} : =  \|u\|^2_{\L2(\Gamma)} + \|\nt u\|^2_{\L2(\Gamma)},$ where $\nt$ is the tangential gradient defined below; and that $\|u\|^2_{\Hk1(\Gamma)} :=  \|u\|^2_{\Hexpo{k}(\Gamma)} + \|\nt u\|^2_{\Hexpo{k}(\Gamma)}$. 
Throughout this work, we rely on the following Hilbert space (see \cite{ventcel1}) 
$$
    \Hexpo{1}\omgam := \{ u \in \Hexpo{1}(\Omega), \ u_{|_\Gamma} \in \Hexpo{1}(\Gamma) \},
$$
equipped with the norm $\|u\|^2_{\Hexpo{1}\omgam} : =  \|u\|^2_{\Hexpo{1}(\Omega)} + \| u\|^2_{\Hexpo{1}(\Gamma)}.$
In a similar way is defined the following space $\L2 \omgam := \{ u \in \L2(\Omega), \ u_{|_\Gamma} \in \L2(\Gamma) \},$ equipped with the norm~$\|u\|^2_{\L2\omgam} :=  \|u\|^2_{\L2(\Omega)} + \| u\|^2_{\L2(\Gamma)}$. More generally, we define $\Hk1 \omgam:=\{ u \in \Hk1(\Omega), \ u_{|_\Gamma} \in \Hk1(\Gamma) \}$. 

Secondly, we recall the definition of the tangential operators (see, e.g.,~\cite{livreopt}). 
\begin{definition}
Let $w  \in \HH1(\Gamma)$, $W \in \HH1(\Gamma,\R^d)$ and $u \in \H2 (\Gamma)$. Then the following operators are defined on $\Gamma$: 
\begin{itemize}
    \item the tangential gradient of $w$ given by $ \nt w :=\na \tilde{w}  - (\na \tilde{w} \cdot \nn)\nn $, where $\tilde{w} \in \HH1(\R^d)$ is any extension of $w$;
\item the tangential divergence of $W$ given by $ \div_{\Gamma} W : = \div \tilde{W}- (\diff \tilde{W} \nn) \cdot \nn$, where~$\tilde{W} \in \HH1(\R^d, \R^d)$ is any extension of $W$ and $\diff\tilde{W} = (\na \tilde{W}_i)_{i=1}^d$ is the differential matrix of the extension $\tilde{W}$;
\item the Laplace-Beltrami operator of $u$ given by 
$ \Delta_\Gamma u  := \div_\Gamma (\nt u)$.
\end{itemize}
\end{definition}

\medskip

Additionally, the constructions of the mesh used in Section \ref{sec:mesh}  and of the lift procedure presented in Section~\ref{section-lift} are based on the following fundamental result that may be found in \cite{tubneig} and \cite[\S 14.6]{GT98}. For more details on the geometrical properties of the tubular neighborhood and the orthogonal projection defined below, we refer to~\cite{D1,D2,actanum}.
\begin{proposition}
\label{tub_neigh_orth_proj_prop}
Let $\Omega$ be a nonempty bounded connected open subset of $\R^{d}$ 
with a~$\c2$ boundary~$\Gamma= \partial \Omega$. Let $\d : \R^d \to \R$ be the signed distance function with respect to $\Gamma$ defined by,
\begin{displaymath}
  \d(x) :=
  \left \{
    \begin{array}{ll}
      -\dist(x, \Gamma)&  {\rm if } \, x \in \Omega ,
      \\
      0&  {\rm if } \, x \in \Gamma ,
      \\
      \dist(x, \Gamma)&  {\rm otherwise},
    \end{array}
  \right. \qquad 
  {\rm with} \quad \dist(x, \Gamma) := \inf \{|x-y|,~ \ y \in \Gamma \}.
\end{displaymath}
Then there exists a tubular neighborhood $\mathcal{U}_{\Gamma}:= \{ x \in \R^d ; |\d(x)| < \delta_\Gamma \}$ of $\Gamma$, of sufficiently small width $\delta_\Gamma$, where {$\d$ is a $\c2$ function}. Its gradient $\na \d$ is an extension of the external unit normal~$\nn$ to $\Gamma$. Additionally, in this neighborhood~$\mathcal{U}_{\Gamma}$, the orthogonal projection~$b$ onto $\Gamma$ is uniquely defined and given by,
\begin{displaymath} 
b\, :~ x \in \mathcal{U}_{\Gamma}  
\longmapsto    b(x):=x-\d(x)\na \d(x) \in \Gamma.
\end{displaymath}
\end{proposition}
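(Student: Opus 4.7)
The plan is to build everything from the \emph{normal exponential map} $\Phi:\Gamma\times\R\to\R^d$, $\Phi(y,t):=y+t\nn(y)$. Since $\Gamma$ is $\c2$, the outward normal field $\nn$ is $\c1$ on $\Gamma$, hence $\Phi$ is $\c1$. At any $(y,0)$, picking an orthonormal basis of $T_y\Gamma$ and completing it by $\nn(y)$, the differential $\d\Phi_{(y,0)}$ has block form $\bigl(\I_{T_y\Gamma}\mid \nn(y)\bigr)$, so it is a linear isomorphism. The inverse function theorem then gives, for each $y\in\Gamma$, a neighborhood $V_y\times(-\varepsilon_y,\varepsilon_y)$ on which $\Phi$ is a $\c1$-diffeomorphism.

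Next I would pass from local to uniform using compactness of $\Gamma$. A standard covering argument yields a single $\delta_\Gamma>0$ such that $\Phi:\Gamma\times(-\delta_\Gamma,\delta_\Gamma)\to\mathcal U_\Gamma$ is a $\c1$-diffeomorphism onto an open set containing $\Gamma$; injectivity on the global product follows from an elementary contradiction-compactness argument (if not, one extracts sequences $(y_n,t_n)\neq(y'_n,t'_n)$ with $t_n,t'_n\to 0$ and $\Phi(y_n,t_n)=\Phi(y'_n,t'_n)$, and by compactness one obtains two local branches of $\Phi^{-1}$ coinciding at some boundary point, contradicting the local diffeomorphism property). Writing $\Phi^{-1}(x)=(\pi(x),\tau(x))$, I would then check that $\pi(x)$ is the unique nearest point of $\Gamma$ to $x$: any minimizer $y_*$ of $z\mapsto|x-z|^2$ on $\Gamma$ satisfies $x-y_*\in(T_{y_*}\Gamma)^\perp$, so $x=y_*+\lambda\nn(y_*)=\Phi(y_*,\lambda)$; injectivity forces $y_*=\pi(x)$ and $\lambda=\tau(x)$. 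Comparing signs with the orientation of $\nn$ shows $\d(x)=\tau(x)$ and $b(x)=\pi(x)$, which immediately gives the representation $b(x)=x-\d(x)\nn(b(x))$.

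The delicate point will be upgrading the regularity of $\d$ from $\c1$ (automatic from $\Phi^{-1}\in\c1$) to $\c2$. This is the content of the result cited from \cite[\S14.6]{GT98}: one exploits that along each normal segment the map $t\mapsto\Phi(y,t)$ is affine, so $\nn$ extends into $\mathcal U_\Gamma$ by $N(x):=\nn(b(x))$, and the identity $\d\Phi_{(y,t)}=\I+t\,\d\nn(y)$ on tangential directions shows that $\Phi^{-1}$ has extra regularity when compared to the bare $\c1$ bound: differentiating $x=b(x)+\d(x)N(x)$ and using $N\cdot\nt b=0$ yields $\na\d(x)=N(x)=\nn(b(x))$, which is $\c1$ in $x$ because $b$ is $\c1$ and $\nn$ is $\c1$ on $\Gamma$. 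Hence $\d\in\c2(\mathcal U_\Gamma)$, $|\na\d|\equiv1$, and $\na\d$ extends $\nn$.

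Assembling these pieces gives the proposition: the tubular neighborhood $\mathcal U_\Gamma$ with $\c2$ signed distance $\d$, the identification $\na\d=\nn\circ b$, and finally the formula $b(x)=x-\d(x)\na\d(x)$ obtained by substituting $\nn(b(x))=\na\d(x)$ into $x=b(x)+\d(x)\nn(b(x))$. The only real obstacle is the $\c2$-regularity statement; the rest is a packaging of the inverse function theorem, a compactness argument for uniformity of $\delta_\Gamma$, and the geometric characterization of orthogonal projection.
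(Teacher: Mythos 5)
The paper offers no proof of this proposition — it is quoted as a known result from \cite{tubneig} and \cite[\S 14.6]{GT98} — and your sketch is exactly the standard argument found in those references: the normal map $\Phi(y,t)=y+t\nn(y)$, the inverse function theorem plus a compactness argument to get a uniform $\delta_\Gamma$ and global injectivity, the first-order condition identifying the nearest point (which also gives surjectivity of $\Phi$ onto $\{|\d|<\delta_\Gamma\}$), and the key identity $\na\d=\nn\circ b$ to recover $\d\in\c2(\mathcal{U}_\Gamma)$ from the a priori $\c1$ regularity of $b$ and $\d$. Your sketch is correct; the only steps to spell out in a full write-up are the sign identification $\d(\Phi(y,t))=t$ (i.e.\ that $y+t\nn(y)$ leaves $\overline\Omega$ for small $t>0$ and enters $\Omega$ for $t<0$) and the justification that the identity $x=b(x)+\d(x)\,\nn(b(x))$ may be differentiated, which holds because $b$, $\d$ are $\c1$ components of $\Phi^{-1}$ and $\nn$ is $\c1$ on the $\c2$ surface $\Gamma$.
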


%
%
%
%
%
%
%

\medskip

Finally, the variational formulation of Problem~\eqref{1} is obtained, using the integration by parts formula on the surface~$\Gamma$ (see, e.g. \cite{livreopt}), and is given by,
\begin{equation}
\label{fv_faible}
    \mbox{find } u \in \HH1 \omgam \mbox{ such that }  a(u,v) = l(v), \,  \forall \ v \in \HH1\omgam,
\end{equation}
where the bilinear form $a$, defined on $\HH1\omgam^2$, is given by,
$$
    a(u,v) := \int_{\Omega} \nabla u \cdot \nabla v \, \d x +\kappa \int_{\Omega}  u  v \, \d x + \beta \int_{\Gamma} \nabla_{\Gamma} u \cdot \nabla_{\Gamma} v \, \d\sigma + \alpha \int_{\Gamma} u  v \, \d\sigma,
$$
and the linear form $l$, defined on $\HH1\omgam$, is given by,
$$
    l(v) :=  \int_{\Omega} f v \, \d x +\int_{\Gamma} g v \, \d\sigma.
$$
The following theorem claims the well-posedness of the problem \eqref{fv_faible} proven in \cite[th. 2]{Jaca} and \cite[th. 3.3]{ventcel1} and establishes the solution regularity proven in \cite[th. 3.4]{ventcel1}.
\begin{theorem}
\label{th_existance_unicite_u}
Let $\Omega$ and  $\Gamma= \partial \Omega$ be as stated previously. Let $\alpha$, $\beta >0$, $\kappa \ge 0 $, and $f \in \L2(\Omega)$,~$g \in \L2(\Gamma)$. Then there exists a unique solution $u \in \Hexpo{1}\omgam$ to problem (\ref{fv_faible}).

Moreover, if $\Gamma$ is of class $\c {k+1}$, and $f \in \Hexpo{k-1}(\Omega)$, $g \in \Hexpo{k-1}(\Gamma)$, then the solution~$u$ of \eqref{fv_faible} is in~$\Hk1 \omgam$ and is the strong solution of the Ventcel problem~\eqref{1}. Additionally, there exists~$c>0$ such that the following inequality holds, 
$$
    \|u\|_{\Hk1\omgam} \le c ( \|f\|_{\Hexpo{k-1}(\Omega)} + \|g\|_{\Hexpo{k-1}(\Gamma)}).
$$
\end{theorem}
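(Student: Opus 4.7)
The plan is to establish existence and uniqueness via the Lax--Milgram theorem applied to the variational formulation~\eqref{fv_faible}, and then obtain higher regularity by a bootstrap argument that couples interior elliptic regularity in~$\Omega$ with elliptic regularity on the compact closed manifold~$\Gamma$.

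For the first part, I would verify the three Lax--Milgram hypotheses on $\HH1\omgam$. Continuity of $l$ and of each of the four summands in $a$ follows from Cauchy--Schwarz together with the continuity of the trace $\HH1(\Omega) \to \L2(\Gamma)$ and the very definition of the $\HH1(\Gamma)$ norm. Coercivity is direct when $\kappa > 0$; in the borderline case $\kappa = 0$, one invokes the trace-Poincaré inequality $\|u\|_{\L2(\Omega)} \le C(\|\na u\|_{\L2(\Omega)} + \|u\|_{\L2(\Gamma)})$, the $\L2(\Gamma)$-norm being controlled by the $\alpha$-term and $\|\nt u\|_{\L2(\Gamma)}$ by the $\beta$-term. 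Lax--Milgram then delivers existence, uniqueness, and the continuous dependence $\|u\|_{\HH1\omgam} \le c(\|f\|_{\L2(\Omega)} + \|g\|_{\L2(\Gamma)})$, which is exactly the claimed estimate in the case $k=1$.

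For the higher regularity, I would proceed by induction on~$k$ by exploiting the coupled structure already visible in~\eqref{1}: in~$\Omega$, $u$ solves the Dirichlet problem $-\Delta u + \kappa u = f$ with trace $v := u_{|\Gamma}$, while on the closed surface~$\Gamma$ the trace $v$ satisfies the elliptic equation $-\beta \Delta_\Gamma v + \alpha v = g - \partial_{\mathrm{n}} u$. Starting from $u \in \HH1\omgam$, trace theory gives $\partial_{\mathrm{n}} u \in \Hexpo{-1/2}(\Gamma)$; elliptic regularity for the coercive operator $-\beta\Delta_\Gamma + \alpha$ on the compact manifold~$\Gamma$ then upgrades~$v$ to~$\Hexpo{3/2}(\Gamma)$, after which classical $\Hexpo{2}$-regularity for the Dirichlet problem in~$\Omega$ yields $u \in \Hexpo{2}(\Omega)$; this produces $\partial_{\mathrm{n}} u \in \Hexpo{1/2}(\Gamma)$, and a second pass through the cycle gives $v \in \Hexpo{2}(\Gamma)$, completing the case $k=1$. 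For general~$k$, one iterates the two-step cycle, each pass gaining one order of differentiability, provided $f$, $g$ and~$\Gamma$ carry the prescribed regularity; propagating the continuity constants through each step produces the final estimate $\|u\|_{\Hk1\omgam} \le c(\|f\|_{\Hexpo{k-1}(\Omega)} + \|g\|_{\Hexpo{k-1}(\Gamma)})$.

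The main technical obstacle is the initialization of the bootstrap, because a priori $\partial_{\mathrm{n}} u$ is only an element of $\Hexpo{-1/2}(\Gamma)$ and cannot be injected into a standard $\L2$-based elliptic theory on~$\Gamma$. This is precisely where the hypothesis $\beta>0$ is decisive: the Laplace--Beltrami term acts as an elliptic regularizer on the boundary that makes the very first half-cycle go through. Once the step $u \in \HH1\omgam \Rightarrow u \in \Hexpo{2}\omgam$ is secured, the remaining inductive stages proceed routinely, the only bookkeeping being to track the fractional trace exponents and to check that the required regularity~$\c{k+1}$ of~$\Gamma$ matches the order of the tangential elliptic regularity invoked at each stage.
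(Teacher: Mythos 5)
The paper never proves Theorem~\ref{th_existance_unicite_u}: it is imported from the literature, with well-posedness cited from \cite[th. 2]{Jaca} and \cite[th. 3.3]{ventcel1} and the $\Hk1\omgam$ regularity from \cite[th. 3.4]{ventcel1}, so there is no internal proof to compare against; your Lax--Milgram plus elliptic-bootstrap sketch is the standard route taken in such references and is sound as a strategy. Continuity of $a$ and $l$ is immediate, coercivity for $\kappa=0$ does rest on the Poincar\'e--Friedrichs inequality with boundary $\L2$ term exactly as you say, and the alternating gain of regularity between the Dirichlet problem in $\Omega$ and the coercive operator $-\beta\Delta_\Gamma+\alpha$ on the closed manifold $\Gamma$ delivers the stated bound, with the $\c {k+1}$ smoothness of $\Gamma$ supplying the coefficient regularity needed for the tangential elliptic estimates. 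Two steps in your outline should be stated more carefully, though neither is fatal. First, for $u\in\HH1(\Omega)$ alone, ``trace theory'' does not give $\partial_{\mathrm{n}} u\in \mathrm{H}^{-1/2}(\Gamma)$: the normal derivative of an $\HH1$ function has no trace in general. You must first test \eqref{fv_faible} with $v\in \mathrm{H}^1_0(\Omega)$ to obtain $-\Delta u+\kappa u=f$ in the distributional sense, hence $\Delta u\in\L2(\Omega)$, and only then is $\partial_{\mathrm{n}} u$ defined in $\mathrm{H}^{-1/2}(\Gamma)$ via Green's identity. Second, the boundary equation $-\beta\Delta_\Gamma (u_{|\Gamma})+\alpha u_{|\Gamma}=g-\partial_{\mathrm{n}} u$ is not available ``from \eqref{1}'' for the weak solution; it has to be derived from \eqref{fv_faible} by testing with extensions of arbitrary $\HH1(\Gamma)$ boundary data, and this derivation is also precisely what justifies the claim that $u$ is the strong solution of \eqref{1}. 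With these two identifications made explicit, and the fractional-exponent bookkeeping carried through as you indicate, your argument is a legitimate self-contained proof of the quoted theorem.
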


%
%
\section{Curved mesh definition}
\label{sec:mesh}

In this section we briefly recall the construction of curved meshes of geometrical order~$r\ge 1$ of the domain~$\Omega$ and introduce some notations.  We refer to~\cite[Section~2]{Jaca} for details and examples (see also~\cite{elliott,scott-2,dubois,Bernardi1989}). Recall for $r\ge 1$, the set of polynomials in~$\R^d$ of order $r$ or less is denoted by $\P r$. 
From now on, the domain~$\Omega$, is assumed to be at least $\c {r+2}$ regular, and~$\tref$ denotes the reference simplex of dimension~$d$. 
In a nutshell, the way to proceed is the following. 
\begin{enumerate}
    \item Construct an affine mesh $\tauh$ of $\Omega$ composed of simplices  $T$ and define the affine transformation $\ft:~\tref\rightarrow T:=\ft(\tref)$ associated to each simplice $T$. 
    \item For each simplex $T \in \tauh$,  a mapping $\fte:~\tref\rightarrow \te := \fte(\tref)$ is designed and the resulting {\it exact elements}  $\te$ will form a curved exact mesh $\taue$ of~$\Omega$. 
    \item For each $T \in \tauh$, the mapping  $\ftr$ is the $\P r$ interpolant of $\fte$. The curved mesh $\taur$ of order $r$ is composed of the elements  $\tr := \ftr(\tref)$.
\end{enumerate}
%
%
%
%
%
%
%
%
\subsection{Affine mesh $\tauh$} 
Let $\tauh$ be a polyhedral mesh of $\Omega$ made of simplices of dimension $d$ (triangles or tetrahedra), it is chosen as quasi-uniform and henceforth shape-regular (see  \cite[definition 4.4.13]{quasi-unif}). 
Define the mesh size $h:= \max\{\mathrm{diam}(T); \; T \in \tauh \}$, where $\mathrm{diam}(T)$ is the diameter of $T$. The mesh domain is denoted by $\omh1:= \cup_{T\in  \tauh}T$. Its boundary denoted by $\gh1 :=\partial \omh1$ is composed of $(d-1)$-dimensional simplices that form a  mesh of $\Gamma = \partial \Omega$. The vertices of $\gh1$ are assumed to lie on~$\Gamma$. 

For $T \in \tauh$, we define an affine function that maps the reference element onto~$T$, 
$$
\ft : \tref \to T:=\ft(\tref).
$$
\begin{remark}
For a sufficiently small mesh size $h$, the mesh boundary satisfies $\gh1 \subset \mathcal{U}_\Gamma$, 
where~$\mathcal{U}_{\Gamma}$ is the tubular neighborhood 
given in proposition~\ref{tub_neigh_orth_proj_prop}.
This guaranties that the orthogonal projection~$b: \gh1\rightarrow \Gamma$ is one to one which is 
required for the construction of the exact mesh. 
\end{remark}

\subsection{Exact mesh $\taue$}
In the 1970's, Scott gave an explicit construction of an exact triangulation in two dimensions in~\cite{scott-2}, generalised by Lenoir in~\cite{Lenoir1986} afterwards 
(see also \cite[\S 4]{elliott} and \cite[\S 3.2]{ed}). 
The present definition of an exact transformation~$\fte$ combines the definitions found in \cite{Lenoir1986,scott-2} with the projection $b$ as used in \cite{dubois}. 

\medskip

Let us first point out that for a sufficiently small mesh size $h$, a mesh element $T$ cannot have~$d+1$ vertices on the boundary $\Gamma$, due to the quasi uniform assumption imposed on the mesh $\tauh$. A mesh element is said to be an internal element if it has at most one  vertex on the boundary $\Gamma$.

\begin{definition}
\label{def:sigma-lambdaetoile-haty}
    Let $T\in\tauh$ be a non-internal element (having at least 2 vertices on the boundary). Denote~$v_i = \ft(\hatv_i)$ as its vertices, where $\hatv_i$ are the vertices of~$\tref$. We define $\varepsilon_i=1$  if $v_i\in  \Gamma$ and~$\varepsilon_i=0$ otherwise.
    To $ \hatx\in \tref$ is associated its barycentric coordinates $\lambda_i$ associated to the vertices~$\hatv_i$ of~$\tref$ and $\lambdaetoile (\hatx):= \sum_{i=1}^{d+1} \varepsilon_i \lambda_i$ (shortly denoted by $\lambdaetoile$). Finally, we define~$\hat{\sigma} : = \{ \hatx \in \tref; \lambdaetoile(\hatx) = 0 \}$ and the function~$
    \haty := \dfrac{1}{\lambdaetoile}\sum_{i=1}^{d+1} \varepsilon_i \lambda_i\hatv_i\in\tref$, which is well defined on~$\trefminissigma$.
\end{definition}
%
%
%
%
%
\begin{figure}[h]
\centering
\begin{tikzpicture}[scale = 0.8]

\draw (0.5,0.5) node[above] {$\hat{T}$};

\draw (0,0) node  {$\bullet$};
\draw (2,0) node  {$\bullet$};
\draw (0,2) node  {$\bullet$};

\draw (0,0) node[below] {$\hatv_1$};
\draw (2,0) node[below]  {$\hatv_2$};
\draw (0,2) node[above]  {$\hatv_3$};

\draw (0,0) -- (2,0) ;
\draw (0,0) -- (0,2);
\draw[blue] (2,0) -- (0,2) ;








\draw[blue] (1.3,0.7) node  {$\bullet$};
\draw[blue] (0.7,0.3) node  {$\bullet$};

\draw[blue] (1.3,0.7) node[below] {$ \haty$};
\draw[blue] (0.7,0.3) node[left] {$ \hatx$};

\draw[blue] (1.3,0.7) -- (0.7,0.3) ;


\draw[blue] [->] (2.8,1) -- (4.3,1);
\draw[blue] (3.5,1) node[above] {${\ft}$};




\draw (7.4,0.8) node[above] {$T$};


\draw (6,0) node  {$\bullet$};
\draw (7.36,2.5) node  {$\bullet$};
\draw (9,0) node  {$\bullet$}; 
\draw (7.36,2.56) -- (9,0);
\draw[red] (7.36,2.56) -- (6,0);
\draw (9,0) -- (6,0);


\draw (6,0)  node[left] {$v_2$};
\draw (7.36,2.6) node[above]  {$v_3$};
\draw (9,0) node[below]  {$v_1$};





\draw plot [domain=-0.2:1.5] (\x+6,-1.786*\x^2+4.3*\x);

\draw (5,-0.6) node {$\Gamma$};
\draw[thick]   [->] (5.2,-0.6)--(5.88,-0.5);






\draw[red] (6.55,1) node  {$\bullet$};
\draw[red] (7.5,0.3) node  {$\bullet$};

\draw[red] (6.5,1) node[above] {${y}$};
\draw[red] (7.5,0.3) node[right] {${x}$};

\draw[red] (6.55,1) -- (7.5,0.3) ;



\end{tikzpicture}
\caption{Visualisation of the two functions
$
\hat{y}:  \, \hat{T}  \mapsto \hat{T}
$
and
$y: \, T  \mapsto \partial T \cap \Gamma
$
in definition \ref{def:fte-y} in a 2D case}
\label{fig:haty}
\end{figure}
Consider a non-internal mesh element $T \in \tauh$, having at least 2 vertices on the boundary, and the affine transformation $\ft$. In the two dimensional case, $\ft(\hatsigma)$ will consist of the only vertex of~$T$ that is not on the boundary $\Gamma$. In the three dimensional case, the tetrahedral~$T$ either has~2 or 3 vertices on the boundary. In the first case, $\ft(\hatsigma)$ is the edge of~$T$ joining its two internal vertices. In the second case, $\ft(\hatsigma)$ is the only vertex of $T$. 
\begin{definition}
\label{def:fte-y}
We denote~$\taue$ the mesh consisting of all exact elements $\te=\fte(\tref)$, where~$\fte = \ft$ for all internal elements, as for the case of non-internal elements $\fte$ is given by, 
\begin{equation}
  \label{eq:def-fte}
\fonction{\fte}{\tref  }{\te :=\fte( \tref) }{  \hatx}{\displaystyle  \fte( \hatx) := \left\lbrace 
\begin{array}{ll}
 x & {\rm if } \, \hatx \in \hatsigma, \\
     x+(\lambdaetoile)^{r+2} ( b(y) - y) &  {\rm if } \, \hatx \in \trefminissigma ,
\end{array}
\right.}
\end{equation}
with $x = \ft( \hatx)$ and $y = \ft( \haty)$. It has been proven in \cite{elliott} that $\fte$ is a $\c1$-diffeomorphism and~$\c{r+1}$ regular on $\tref$.  
\end{definition}
%
%
%
%
%
%
%
%
\begin{remark}
\label{rem:Fe_T}
For $x\in T \cap \ghh$, we have that $\lambdaetoile = 1$ 
and so $y = x$ inducing that~$\fte( \hatx) = b(x)$. Then
$\fte \circ \ft^{-1} = b$ on $T \cap \ghh$. 
\end{remark}

\subsection{Curved mesh $\taur$ of order $r$.}
\label{sub-sec:ftr}
The exact mapping $\fte$, defined in \eqref{eq:def-fte}, is interpolated as a polynomial of order $r \ge 1$ in the classical $\P r$-Lagrange basis on $\tref$. The interpolant is denoted by $\ftr$, which is a $\c1$-diffeomorphism and is in~$\c {r+1}(\tref)$ (see \cite[chap. 4.3]{PHcia}). For more exhaustive details and properties of this transformation, we refer to \cite{elliott,ciaravtransf,PHcia}. 
%
%
%
%
%
%
%
%
%
%
%
%
%
%
Note that, by definition, $\ftr$ and $\fte$ coincide on all $\P r$-Lagrange nodes. 
The curved mesh of order $r$ is $\taur := \{ \tr; T \in \tauh \}$, $\omhr := \cup_{\tr \in  \taur}\tr$ is the mesh domain and $\ghr:= \partial \omhr$ is its boundary. 

\section{Functional lift}
\label{section-lift}
%

We recall that $r \ge 1$ is the geometrical order of the curved mesh. With the help of aforementioned transformations, we define \textit{lifts} to transform a function on a domain $\omhr$ or $\ghr$ into a function defined on $\Omega$ or $\Gamma$ respectively, in order to compare the numerical solutions to the exact one. 

We recall that the idea of lifting a function from the discrete domain onto the continuous one was already treated and discussed in many articles dating back to the 1970's, like \cite{nedelec,scott-2,Lenoir1986,Bernardi1989} and others. Surface lifts were firstly introduced in 1988 by Dziuk in \cite{Dz88}, to the extend of our knowledge, and discussed in more details and applications by Demlow in many of his articles (see \cite{D1,D2,D3,D4}).

%
%
%
%
%
%
\subsection{Surface and volume lift definitions}
\label{sec:lift-def-surf-vol}
%
%
%
%
%
%
\begin{definition}[Surface lift]
\label{def:liftsurface}
    Let $u_h\in {\rm L}^2(\ghr)$. The surface lift $u_h^L\in {\rm L}^2(\Gamma)$ associated to $u_h$ is defined by, 
$$
        u_h^L\circ b := u_h,
$$
where $b: \ghr\rightarrow \Gamma$ is the orthogonal projection, defined in Proposition~\ref{tub_neigh_orth_proj_prop}. Likewise, to $u \in \L2(\Gamma)$ is associated its inverse lift $u^{-L}$ given by,
$
        u^{-L} := u \circ b \in \L2(\ghr).
$
\end{definition}
%
%
%
%
%
%
%
%
%
%
%
%
The use of the orthogonal projection $b$ to define the surface lift is 
natural since $b$ is well defined on the tubular neighborhood $\mathcal{U}_{\Gamma}$ of $\Gamma$ (see Proposition~\ref{tub_neigh_orth_proj_prop}) and henceforth on $\ghr \subset \mathcal{U}_{\Gamma}$ for sufficiently small mesh size $h$. 

A volume lift is defined, using the notations in definition \ref{def:sigma-lambdaetoile-haty}, we introduce the transformation~$\Ghr:~\omhr \rightarrow \Omega$ (see figure \ref{fig:Gh2}) given piecewise for all $\tr\in \taur$ by,
\begin{equation}
  \label{eq:def-fter}
  {\Ghr}_{|_{\tr}} \!\! := \ftre \circ ({\ftr})^{-1},
  \;
  \ftre( \hatx) \!:=\! \left\lbrace 
\begin{array}{ll}
 \!\!   x & {\rm if } \, \hatx \in \hatsigma \\
  \!\!   x+(\lambdaetoile)^{r+2} ( b(y) - y) &  {\rm if } \, \hatx \in \trefminissigma
\end{array}
\right.,
\end{equation}
with $ x := \ftr( \hatx)$ and $y := \ftr( \haty)$ (see figure~\ref{fig:haty} for the affine case). Notice that this implies that~${\Ghr}_{|_{\tr}} = id_{|_{\tr}}$, for any internal mesh element $\tr~\in~\taur$. Note that, by construction,~$\Ghr$ is globally continuous and piecewise differentiable on each mesh element. For the remainder of this article, the following notations are crucial. $\diff\Ghr$ denotes the differential of~$\Ghr$,~$(\diff{\Ghr})^t$ is its transpose and $\Jh$ is its Jacobin.

\begin{figure}[h]
\centering
\begin{tikzpicture}[scale = 0.8]





\draw[red] (0.4,0.8) node[above] {$\tdeux$};
\draw[red] (-1,0) arc (180:123:3);


\draw (-1,0) node  {$\bullet$};
\draw (0.36,2.5) node  {$\bullet$};
\draw (2,0) node  {$\bullet$}; 
\draw (0.36,2.56) -- (2,0);
\draw (2,0) -- (-1,0);




\draw (-1,0)  node[left] {$v_1$};
\draw (0.36,2.6) node[above]  {$v_3$};
\draw (2,0) node[below]  {$v_2$};

\draw (-0.7,1.35) node[left]  {$v_5$};
\draw (0.5,0) node[below]  {$v_4$};
\draw (1.15,1.3) node[right]  {$v_6$};

\draw[red] (0.5,0) node  {$\bullet$};
\draw[red] (-0.6,1.5) node  {$\bullet$}; 
\draw[red] (1.15,1.3) node  {$\bullet$};


\draw plot [domain=-0.3:1.5] (\x-1,-1.786*\x^2+4.3*\x);

\draw (-1.7,-1) node {$\Gamma$};


\draw[red] [->] (3.2,1) -- (4.8,1);
\draw[red] (4,1) node[above] {$\textcolor{red}{\Ghdeux}$};



\draw plot [domain=-0.3:1.5] (\x+6,-1.786*\x^2+4.3*\x);

\draw (5.3,-1) node {$\Gamma$};




\draw[red] (7.4,0.8) node[above] {$\te$};


\draw (6,0) node  {$\bullet$};
\draw (7.36,2.5) node  {$\bullet$};
\draw (9,0) node  {$\bullet$}; 
\draw (7.36,2.56) -- (9,0);
\draw (9,0) -- (6,0);




\draw (6,0)  node[left] {$v_1$};
\draw (7.36,2.6) node[above]  {$v_3$};
\draw (9,0) node[below]  {$v_2$};

\draw (6.3,1.3) node[left]  {$v_5$};
\draw (7.5,0) node[below]  {$v_4$};
\draw (8.15,1.3) node[right]  {$v_6$};

\draw[red] (7.5,0) node  {$\bullet$};
\draw[red] (6.4,1.5) node  {$\bullet$}; 
\draw[red] (8.15,1.3) node  {$\bullet$};
\end{tikzpicture}
\caption{Visualisation of $G_h^{(2)}: \tdeux \to \te$ in a 2D case, for a quadratic case $r=2$.}
\label{fig:Gh2}
\end{figure}
%
%
%
\begin{definition}[Volume lift]
\label{def:liftvolume}
    Let $u_h\in {\rm L}^2(\omhr)$. We define the volume lift associated to $u_h$, denoted $u_h^\ell \in {\rm L}^2(\Omega)$, by, 
    $$
        u_h^\ell\circ \Ghr := u_h.
    $$
    In a similar way, to $u\in {\rm L}^2(\Omega)$ is associated its inverse lift $u^{-\ell}\in \L2(\omhr)$ given by~$
    u^{-\ell} :=u \circ \Ghr.
    $
\end{definition}
%
%
%
%
%
%
%
\begin{proposition}
\label{prop:trace}
The volume and surface lifts coincide on $\ghr$,
\begin{displaymath}
  \forall ~ u_h \in {\rm H}^1(\omhr), \quad 
  \left ( {\rm Tr} ~u_h\right )^L = {\rm Tr} (u_h^\ell).
\end{displaymath}
Consequently, the surface lift $v_h^L$ (resp. the inverse lift~$v^{-L}$) will now be simply denoted by $v_h^\ell$ (resp.~$v^{-\ell}$). 
\end{proposition}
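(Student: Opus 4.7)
The plan is to reduce the proposition to a single geometric fact: the volume-lift transport map $\Ghr$ coincides with the orthogonal projection $b$ on the boundary $\ghr$. Once this identity is in hand, both sides of the claimed equality become equal to $u_h$ pulled back through the same point, and the proposition follows by unwinding the definitions.

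\textbf{Step 1 (key lemma: $\Ghr = b$ on $\ghr$).} Fix a non-internal element $\tr \in \taur$ and a point $x \in \tr \cap \ghr$. Let $\hatx := (\ftr)^{-1}(x) \in \tref$. Because $\ftr$ is the $\P r$-Lagrange interpolant of $\fte$, it sends Lagrange nodes to Lagrange nodes, and in particular maps the face $\{\lambdaetoile = 1\} = \tref \cap \{\lambda_i = 0 \text{ for all } i \text{ with } \varepsilon_i = 0\}$ onto the boundary face $\tr \cap \ghr$. Hence $\lambdaetoile(\hatx) = 1$. From the definition of $\haty$ in Definition~\ref{def:sigma-lambdaetoile-haty} this forces $\haty = \hatx$, so $y := \ftr(\haty) = x$. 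Substituting into~\eqref{eq:def-fter} gives
\[
\ftre(\hatx) \;=\; x + (1)^{r+2}\bigl(b(y) - y\bigr) \;=\; b(x),
\]
so that $\Ghr(x) = \ftre\circ(\ftr)^{-1}(x) = b(x)$. For internal elements the boundary intersection is empty, so nothing needs to be checked. This is the analogue, at the curved-mesh level, of Remark~\ref{rem:Fe_T}.

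\textbf{Step 2 (conclusion).} Pick $p \in \Gamma$ and let $x \in \ghr$ be the unique preimage $b^{-1}(p)$ (well-defined for $h$ small, since $\ghr \subset \mathcal{U}_\Gamma$). Using Step~1, $\Ghr(x) = b(x) = p$, and so by Definition~\ref{def:liftvolume},
\[
\mathrm{Tr}(u_h^\ell)(p) \;=\; u_h^\ell(p) \;=\; u_h^\ell(\Ghr(x)) \;=\; u_h(x) \;=\; (\mathrm{Tr}\,u_h)(x).
\]
On the other hand, by Definition~\ref{def:liftsurface},
\[
(\mathrm{Tr}\,u_h)^L(p) \;=\; (\mathrm{Tr}\,u_h)\bigl(b^{-1}(p)\bigr) \;=\; (\mathrm{Tr}\,u_h)(x).
\]
The two right-hand sides agree, which proves the identity. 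Continuity of $u_h^\ell$ up to $\Gamma$ in the $H^1$-trace sense is inherited from $u_h \in \mathrm{H}^1(\omhr)$ and the fact that $\Ghr$ is a piecewise $\mathcal{C}^1$-diffeomorphism.

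\textbf{Expected obstacle.} The only non-formal point is Step~1, and specifically the identification of the preimage of the boundary face of $\tr$ under $\ftr$. One must verify that the $\P r$-Lagrange interpolation preserves the face $\{\lambdaetoile = 1\}$; this follows because the Lagrange nodes on that face are precisely those with $\lambda_i = 0$ for the interior vertices $\hatv_i$ (equivalently $\varepsilon_i = 0$), and each such node is mapped by both $\fte$ (by Remark~\ref{rem:Fe_T}) and hence by its interpolant $\ftr$ onto $\Gamma$. The rest of the proof is essentially definitional.
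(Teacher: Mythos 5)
Your proof is correct and follows essentially the same route as the paper: the whole content is the identity $\Ghr = b$ on $\ghr$, obtained exactly as in the paper by noting that $\hatx=(\ftr)^{-1}(x)$ satisfies $\lambdaetoile=1$, hence $\haty=\hatx$, $y=x$ and $\ftre(\hatx)=b(x)$. Your Step 1 justification of $\lambdaetoile=1$ via the boundary face and your definitional Step 2 merely spell out details the paper leaves implicit.
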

\begin{proof}
Taking $x\in \tr\cap\ghr$, $ \hatx = (\ftr)^{(-1)}(x)$ satisfies $\lambdaetoile=1$ and so~$\haty=\hatx$ and $y=x$. Thus~$\ftre( \hatx) = b(x) $, in other words, 
$$
    \Ghr(x)=\ftre\circ( \ftr)^{-1}(x) = b (x), \ \ \ \ \ \forall \ x \in \tr\cap \ghr .
$$
\end{proof}
%
%
%
%
%
%
%
%
%
\begin{proposition}
\label{prop-Gh}
Let $\tr \in \taur$. Then the mapping ${\Ghr}_{|_{\tr}}$ is $\c{r+1}(\tr)$ regular and a $\c 1$- diffeomorphism from $\tr$ onto $\te$. Additionally, for a sufficiently small mesh size $h$, there exists a constant $c>0$, independent of $h$, such that, 
\begin{equation}
\label{ineq:Gh-Id_Jh-1} 
  \forall \ x \in \tr, \ \ \ \ \| \diff {\Ghr}(x) - \I \| \le c h^r \qquad \mbox{ and } \qquad 
    | \Jh(x)- 1 | \le c h^r,
\end{equation}
where $\Ghr$ is defined in \eqref{eq:def-fter} and $\Jh$ is its Jacobin.
\end{proposition}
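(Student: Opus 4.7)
My plan is to exploit the factorisation $G_h^{(r)}|_{T^{(r)}} = F_T^{(e)}\circ (F_T^{(r)})^{-1}$ together with the fact that $F_T^{(r)}$ is the $\mathbb{P}^r$-Lagrange interpolant of $F_T^{(e)}$ on the reference element $\hat T$. First I would dispose of internal elements: there $F_T^{(e)} = F_T$ is already affine, hence coincides with its interpolant $F_T^{(r)} = F_T$, so $G_h^{(r)}|_{T^{(r)}} = \mathrm{id}_{T^{(r)}}$ and both inequalities are trivial with $c=0$. This reduces everything to a non-internal element $T$.

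For the regularity and diffeomorphism statement, I would note that $F_T^{(e)}$ is $\mathcal{C}^{r+1}(\hat T)$ and a $\mathcal{C}^1$-diffeomorphism (quoted from Definition~\ref{def:fte-y}), and that $F_T^{(r)}$ is a polynomial of degree $r$, hence $\mathcal{C}^\infty(\hat T)$. The inverse $(F_T^{(r)})^{-1}$ is $\mathcal{C}^{r+1}$ on $T^{(r)}$ provided $F_T^{(r)}$ is a $\mathcal{C}^1$-diffeomorphism, which will follow from the same perturbation bound that gives \eqref{ineq:Gh-Id_Jh-1}: for $h$ small enough, $DF_T^{(r)}$ is an $O(h^{r+1})$-perturbation of the invertible affine matrix $DF_T$ and remains uniformly invertible. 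Composing yields $G_h^{(r)}|_{T^{(r)}}\in \mathcal{C}^{r+1}(T^{(r)})$ and a $\mathcal{C}^1$-diffeomorphism onto $T^{(e)}$.

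The core quantitative step is the reference-element interpolation estimate
\[
\| F_T^{(e)} - F_T^{(r)} \|_{\mathrm{W}^{1,\infty}(\hat T)} \le c\, h^{r+1},
\]
which I would obtain from the standard $\mathbb{P}^r$ Lagrange interpolation error applied to $F_T^{(e)} - F_T$ (using that interpolation is exact on $\mathbb{P}^r \ni F_T$), combined with the bound $|F_T^{(e)} - F_T|_{\mathrm{W}^{r+1,\infty}(\hat T)} = O(h^{r+1})$ on the deviation from affinity. This bound on the affine defect follows from the explicit expression \eqref{eq:def-fte}: the factor $(\lambda^*)^{r+2}$ and the smoothness of $b$ and $\Gamma$ propagate through the $\mathrm{W}^{r+1,\infty}$ seminorm computation (as carried out in \cite{elliott,Lenoir1986}). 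This is where I expect the main technical friction, since it requires carefully tracking how derivatives of $b\circ F_T\circ \hat{y} - F_T\circ \hat{y}$ scale in $h$ through the smoothness of~$\Gamma$.

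Once the interpolation estimate is in hand, the two inequalities follow by chain-rule manipulations. For $x = F_T^{(r)}(\hat x)\in T^{(r)}$, the identity
\[
DG_h^{(r)}(x) - I = \bigl( DF_T^{(e)}(\hat x) - DF_T^{(r)}(\hat x)\bigr)\bigl(DF_T^{(r)}(\hat x)\bigr)^{-1}
\]
combined with $\|(DF_T^{(r)})^{-1}\|_\infty \le c/h$ (affine scaling of $DF_T$ plus the just-established perturbation bound, valid for $h$ small) produces $\|DG_h^{(r)}(x)-I\|\le c h^{r+1} \cdot h^{-1} = c h^r$. For the Jacobian, I write $J_h = \det(I + E)$ with $E := DG_h^{(r)} - I$ satisfying $\|E\|\le c h^r$, and expand the determinant as a multilinear form to get $|\det(I+E) - 1| \le c \|E\| \le c h^r$ for $h$ small enough. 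This closes the proposition.
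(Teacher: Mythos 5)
Your argument is built on the factorisation $\Ghr|_{\tr}=\fte\circ(\ftr)^{-1}$ together with the fact that $\ftr$ is the $\P r$-Lagrange interpolant of $\fte$. That is not the map in the statement: it is the \emph{former} lift $G_h$ of Elliott et al.\ recalled in Remark \ref{rem:lift-elliott-trace-non-conserve}. The transformation $\Ghr$ of \eqref{eq:def-fter} is $\ftre\circ(\ftr)^{-1}$, where $\ftre$ is built from $x=\ftr(\hatx)$ and $y=\ftr(\haty)$ rather than from the affine map $\ft$; this modification is precisely what makes $\Ghr$ coincide with $b$ on $\ghr$ and is the point of the paper. For this map your central quantitative step is incorrectly sourced: $\ftr$ is the interpolant of $\fte$, \emph{not} of $\ftre$ --- at an interior Lagrange node $\hatx_i$ one has $\ftre(\hatx_i)-\ftr(\hatx_i)=(\lambdaetoile(\hatx_i))^{r+2}\,(b(y_i)-y_i)\neq 0$ in general, since $y_i=\ftr(\haty(\hatx_i))$ lies on $\ghr$ but not on $\Gamma$ --- so the bound $\|\ftre-\ftr\|_{\mathrm{W}^{1,\infty}(\tref)}\le c\,h^{r+1}$ cannot be deduced from ``the standard $\P r$ Lagrange interpolation error''. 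Likewise, your regularity argument (compose the $\c{r+1}$ exact map with $(\ftr)^{-1}$) quotes the regularity of $\fte$, which says nothing about $\ftre$, whose smoothness across $\hatsigma$ has to be proved.

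What is actually required is a direct estimate of $\rhotr=\ftre-\ftr=(\lambdaetoile)^{r+2}(b(y)-y)$ with $y=\ftr(\haty)$, and this is where the real difficulty sits: the derivatives of order $m$ of $\haty$, hence of $y$ and of $b(y)-y$, blow up like $(\lambdaetoile)^{-m}$ near $\hatsigma$. The paper controls this with the Faà di Bruno formula (Propositions \ref{prop-y} and \ref{Prop:by-y}), using that on the curved boundary edge $\partial\tr\cap\ghr$ the identity is the $\P r$-interpolant of $b$; the prefactor $(\lambdaetoile)^{r+2}$ then absorbs the blow-up, giving $\|\diff^m\rhotr\|_{\Linf(\tref)}\le c\,h^{r+1}$ and the continuous extension of the derivatives by $0$ on $\hatsigma$, i.e.\ the $\c{r+1}$ regularity (Proposition \ref{prop:rho-tr}). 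Your closing steps --- the identity $\diff\Ghr-\I=\bigl(\diff\rhotr\circ(\ftr)^{-1}\bigr)\,\diff(\ftr)^{-1}$ in its corrected form, the scaling $\|\diff(\ftr)^{-1}\|\le c/h$, the determinant expansion for $\Jh$, and the small-perturbation diffeomorphism argument --- are sound and essentially those of the paper, but they only close once the bound on $\diff\rhotr$ is established by such a singular-weight analysis, which your proposal replaces by an interpolation property that does not hold for the relevant pair of maps.
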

The full proof of this proposition is partially adapted from \cite{elliott} and has been detailed in appendix~\ref{appendix:proof-Ghr}.
\begin{remark}[Lift regularity]
\label{rem:lift-regularity}
The lift transformation $\Ghr:~\omhr \rightarrow \Omega$  in (\ref{eq:def-fter}) involves the function,
\begin{displaymath}
  \label{rem-def-rho}
  \rhotr :~ \hatx \in \tref \mapsto
 (\lambdaetoile)^{s} (b(y) - y),
\end{displaymath}
with an exponent $s=r+2$ inherited from \cite{elliott}: this exponent value guaranties the $\c{r+1}$ (piecewise) regularity of the function $\Ghr$.
However, decreasing that value to $s=2$  still ensures 
that $\Ghr$ is a (piecewise) $\c{1}$ diffeomorphism and also that 
Inequalities (\ref{ineq:Gh-Id_Jh-1}) hold: this can be seen when examining the proof of Proposition \ref{prop-Gh} in Appendix~\ref{appendix:proof-Ghr}.
Consequently,  the convergence theorem \ref{th-error-bound} still holds when setting~$s=2$ in the definition of $\rhotr$.
\end{remark}
\begin{remark}[Former lift definition]
\label{rem:lift-elliott-trace-non-conserve}
The volume lift defined in \eqref{def:liftvolume} is an adaptation of the lift definition in \cite{elliott}, which however does not fulfill Proposition \ref{prop:trace}. Precisely, in~\cite{elliott},  to~$u_h \in \HH1 (\omhr)$ is associated the lifted function \textcolor{blue}{$u_h^{e\ell}\in \HH1 (\Omega)$}, given by \textcolor{blue}{$u_h^{e\ell} \circ G_h := u_h$}, where 
$G_h:~ \omhr\rightarrow \Omega$ is defined piecewise, for each mesh element~$\tr \in \taur$, by 
$ {G_h}_{|_{\tr}} := \fte \circ ({\ftr})^{-1}$, 
where~$T$ is the affine element relative to $\tr$, $\fte$ is defined in \eqref{eq:def-fte} and $\ftr$ is its $\P r$-Lagrangian  interpolation given in section \ref{sub-sec:ftr}. 
%
%
%
%
%
%
However, this transformation does not coincide with the orthogonal projection~$b$, on the mesh boundary $\ghr$. 
Indeed, since $\fte \circ \ft^{-1} = b$ on~$T \cap \ghh$ (see Remark \ref{rem:Fe_T}), we have,
$$
    G_h(x) = b \circ \ft\circ ({\ftr})^{-1}(x) \ne b(x), \quad \forall \ x \in \ghr\cap\tr .
$$
Consequently in this case, $( {\rm Tr} ~u_h )^L \ne {\rm Tr} (\textcolor{blue}{u_h^{e\ell}})$. 
\end{remark}

%
%
%
%
%
%
%
%
%
%
%
%
%
%
%
%
%
%
%
%
%
%
\subsection{Lift of the variational formulation} 
\label{sub-sec:impact-lift}
With the lift operator, one may express an integral over $\ghr$ (resp.~$\omhr$) with respect to one over $\Gamma$ ( resp. $\Omega$), as will be discussed in this section. 

\paragraph{Surface integrals.}
In this subsection, all results stated may be found alongside their proofs in \cite{D1, demlow2019}, but we recall some necessary informations for the sake of completeness. For extensive details, we also refer to \cite{D2,actanum, Dz88}. Throughout the rest of the paper, $\, \d\sigma$ and $\, \d\sigma_h$ denote respectively the surface measures  on~$\Gamma$ and on $\ghr$. \medskip

Let $\Jb$ be the Jacobian of the orthogonal projection $b$, defined in Proposition~\ref{tub_neigh_orth_proj_prop}, such that $ \d\sigma(b(x))=\Jb(x) \d\sigma_h(x)$, for all $x \in \ghr$. Notice that $\Jb$ is bounded independently of $h$ and its detailed expression may be found in \cite{D1,D2}. Consider also the lift of $\Jb$ given by $\Jblifte \circ b = \Jb$ (see Definition~\ref{def:liftsurface}). \medskip

%
%
%
%
%
%
%
%
Let $u_h, v_h \in \HH1 (\ghh) $ with $u_h^\ell, v_h^\ell \in \HH1 (\Gamma)$ as their respected lifts. Then, one has,
\begin{equation}
    \label{pass_fct_scalaire_surface}
    \int_{\ghr}   u_h v_h \, \d\sigma_h = \int_{\Gamma}   u^{\ell }_h v^{\ell }_h  \frac{ \, \d\sigma}{\Jblifte}.
\end{equation}

A similar equation may be written with tangential gradients. We start by given the following notations. We denote the outer unit normal vector over~$\Gamma$ by $\nn$ and the outer unit normal vector over~~$\ghr= \partial \omhr$ by $\nnhr$. Denote $P:= \I-\nn \otimes \nn$ and~$P _h := \I-\nnhr \otimes \nnhr$ respectively as the orthogonal projections over the tangential spaces of $\Gamma$ and~$\ghr$. Additionally, the Weingarten map~$\hessienne: \R^{d } \to \R^{d\times d}$ is given by~$\hessienne :=\diff^2 \d $, where~$\d$ is the signed distance function (see Proposition~\ref{tub_neigh_orth_proj_prop}). 
With the previous notations, we have,
$$
    \nabla_{\ghh }{v_h(x)}=P _h(I-\d \hessienne)P\nabla_{\Gamma}{v^{\ell }_h(b(x))}, \  \ \ \ \ \forall \ x \in \ghr.
$$
%
%
%
%
%
%
%
%
%
%
Using this equality, we may derive 
the following expression, 
\begin{equation}
\label{pass_grad_surface}
    \int_{\ghr} \nabla_{\ghr} u_h \cdot \nabla_{\ghr}  v_h \, \d\sigma_h = \int_{\Gamma} \Ahell \nabla_{\Gamma} u^{\ell }_h \cdot \nabla_{\Gamma} v^{\ell }_h  \, \d\sigma,
\end{equation} 
where $\Ahell$ is the lift of the matrix $A_h$ given by,
\begin{equation}
\label{eq:expression_Ah}
   A_h(x) := \frac{1}{\Jb(x)}P(I-\d \hessienne)P _h(I-\d \hessienne)P(x), \ \ \ \ \ \forall \ x \in \ghr.
\end{equation}

\paragraph{Volume integrals.} 
%
%
%
%
%
%
%
%
Similarly, consider $u_h, v_h \in \HH1 (\omhh) $ and let $u_h^\ell, v_h^\ell \in \HH1 (\Omega)$ be their respected lifts (see Definition \ref{def:liftvolume}), we have,
\begin{equation}
\label{pass_fct_scalaire_volume}
    \int_{\omhh}u_h v_h \, \d x = \int_\Omega u_h^\ell v_h^\ell \frac{1}{\Jhlifte} dy,
\end{equation} 
where $\Jh$ denotes the Jacobian of $\Ghr$ and $\Jhlifte$ is its lift given by $\Jhlifte \circ \Ghr = \Jh$. 

Additionally, the gradient can be written as follows, for any $x \in \omhr$,
$$
    \nabla v_h(x) =\nabla( v_h^\ell \circ {\Ghr})(x)= {\transpose}\diff {\Ghr}(x) ( \nabla v_h^\ell)\circ{(\Ghr(x))}.
$$
Using a change of variables $z=\Ghr(x) \in \Omega$, one has,
$
    (\nabla v_h)^\ell(z) = {\transpose} \diff {\Ghr}(x)  \nabla v_h^\ell{(z)}. 
$
Finally, introducing the notation,
\begin{equation}
\label{eq:matrice-Ghr}
    \G (z) := {\transpose}\diff {\Ghr}(x),
\end{equation}
one has,
\begin{equation}
\label{pass_grad_volume}
     \int_{\Omega^{(r)}_h} \nabla u_h \cdot \nabla v_h \, \d x = \int_{\Omega}  \G (\nabla u_h^\ell) \cdot \G (\nabla v_h^\ell) \frac{\, \d x}{\Jhlifte}.
\end{equation}
\subsection{Useful estimations}
\paragraph{Surface estimations.}
We recall two important estimates proved in~\cite{D1}. There exists a constant~$c>0$ independent of $h$ such that,  
\begin{equation}
  \label{ineq:AhJh}
    ||\Ahell -P||_{\Linf(\Gamma)} \le c h^{r+1} \qquad \mbox{ and } \qquad 
        \left\| 1-\frac{1}{\Jblifte} \right\|_{\Linf(\Gamma)} \le ch^{r+1},
\end{equation}
where $\Ahell$ is the lift of $A_h$ defined in \eqref{eq:expression_Ah} and $\Jb$ is the Jacobin of the projection $b$.
%
%
%
%
%
%
%
%
\medskip

\paragraph{Volume estimations.}

    A direct consequence of the proposition \ref{prop-Gh} is that both $\mathrm{D}\Ghr$ and $\Jh$ are bounded on every $\tr \in \taur$. As an extension of that, by Definition~\ref{def:liftvolume} of the lift, both $\G$ and~$\Jhlifte$ are also bounded on $\te$. Additionally, the inequalities 
    \eqref{ineq:Gh-Id_Jh-1} will not be directly used in the error estimations in Section~\ref{ERROR-section}, the following inequalities will be used instead,
    \begin{equation}
        \label{ineq:Ghr-Id_1/Jh-1}
        \forall \ x \in \te, \ \ \ \ \| \G (x) - \I \| \le c h^r \qquad \mbox{ and } \qquad
         \left| \frac{1}{\Jhlifte(x)}- 1 \right| \le c h^r,
    \end{equation}
    where $\G$ is given in \eqref{eq:matrice-Ghr}. These inequalities are a consequence of the lift applied on the inequalities 
\eqref{ineq:Gh-Id_Jh-1}.
%
%
%
%
%
%
%
%
%
%
%
%
%
\begin{remark}
\label{rem:norm-equiv}
Let us emphasize that, there exists an equivalence between the $\Hexpo{m}$-norms over~$\omhh$ (resp. $\ghh$) and the $\Hexpo{m}$-norms over $\Omega$ (resp. $\Gamma$), for $m=0,1$. 
Let $v_h \in \HH1 (\omhh,\ghh) $ and let~$v_h^\ell \in \HH1 \omgam$ be its lift, then for~$m=0, 1$, there exist strictly positive constants independent of $h$ such that,
\begin{equation*}
\begin{array}{rcccl}
c_1 \|v_h^\ell\|_{\Hexpo{m}(\Omega)} &\le& \|v_h\|_{\Hexpo{m}(\omhh)} &\le&  c_2 \|v_h^\ell\|_{\Hexpo{m}(\Omega)}, \\
c_3 \| v_h^\ell\|_{\Hexpo{m}(\Gamma)} &\le& \| v_h\|_{\Hexpo{m}(\ghh)} &\le&  c_4 \| v_h^\ell\|_{\Hexpo{m}(\Gamma)}.
\end{array}
\end{equation*}
The second estimations are proved in \cite{D1}. As for the first inequalities, one may prove them while using the equations \eqref{pass_fct_scalaire_volume} and \eqref{pass_grad_volume}. They hold due to the fact that~$\Jh$ and~$\diff \Ghr$ (respectively~$\frac{1}{\Jhlifte}$ and~$\G$) are bounded on $\tr$ (resp. $\te$), as a consequence of the proposition~\ref{prop-Gh} and the inequalities in \eqref{ineq:Ghr-Id_1/Jh-1}. 

%
%
%
\end{remark}
%
%
%
%
%
\section{Finite element approximation}
\label{FE-section}
In this section, is presented the finite element approximation of problem \eqref{1} using $\P k$-Lagrange finite element approximation. We refer to \cite{EG,PHcia} for more details on finite element methods. 
%
%
%
%
%
\subsection{Finite element spaces and interpolant definition}
\label{subsec:fe-space-interpolant}
%
%
%
%
%
%
Let~$k \geq 1$, given a curved mesh $\taur$, the $\P k$-Lagrangian finite element space is given by, 




%
%
%
%
%
%
$$
\Vh := \{ \chi \in C^0(\omhr); \ \chi_{|_T}= \hat{\chi} \circ (\ftr )^{-1} , \ \hat{\chi} \in \mathbb{P}^k(\hat{T}), \ \forall \ T \in \taur \}.
$$
%
%
%
%
%
Let the $\P r$-Lagrangian interpolation operator be denoted by $\Ihr: v \in \c0 (\omhr) \mapsto \Ihr (v) \in \Vh$. 
%
%
%
%
%
%
The lifted finite element space (see Section \ref{sec:lift-def-surf-vol} for the lift definition), is defined by,
$$
     \Vhlifte := \{ v_h^\ell; \ v_h \in \Vh \},
$$
and its lifted interpolation operator $\Ihlifte$ given by, 
\begin{equation}
\label{def:interpolation-op-lifte}
    \fonction{\Ihlifte}{\c0 ({\Omega})}{\Vhlifte}{v}{\Ihlifte (v) := \big( \Ihr (v^{-\ell}) \big)^\ell.} 
\end{equation}
Notice that, since $\Omega$ is an open subset of $\R^2$ or $\R^3$, then we have the following Sobolev injection~$\Hexpo{k+1}(\Omega) \hookrightarrow \c0 (\Omega)$. 
Thus, any function $w \in \Hexpo{k+1}(\Omega)$ may be associated to an interpolation element~$\Ihlifte(w) \in \Vhlifte$. \medskip 

The lifted interpolation operator plays a part in the error estimation and the following interpolation inequality will display the finite element error in the estimations. 
\begin{proposition}
\label{prop:interpolation-ineq}
Let $v \in \Hk1 \omgam$ and $2 \le m \le k+1$. There exists a constant~$c>0$ independent of $h$ such that the interpolation operator $\Ihlifte$ satisfies the following inequality,
$$
    \|v-\Ihlifte v\|_{\L2\omgam} + h \|v-\Ihlifte v\|_{\HH1\omgam} \le c h^{m} \|v\|_{\Hexpo{m} \omgam}. 
$$
%
%
%
%
%
%
\end{proposition}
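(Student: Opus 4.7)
The plan is to split the combined $(\Omega,\Gamma)$-norm into its volume and surface contributions and estimate each separately via a reference-element reduction, combining the norm equivalences established in Section~\ref{section-lift} with the classical Bramble--Hilbert argument on $\tref$.

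For the volume contribution, I would first use the norm equivalence of Remark~\ref{rem:norm-equiv} to move the estimate onto the curved mesh domain:
$$\|v-\Ihlifte v\|_{\Hexpo{j}(\Omega)} \le c\,\|v^{-\ell}-\Ihr v^{-\ell}\|_{\Hexpo{j}(\omhr)},\qquad j=0,1,$$
where the right-hand side follows directly from the definition~\eqref{def:interpolation-op-lifte} of $\Ihlifte$. On each element $\tr\in\taur$ I would pull back via $\ftr$ to $\tref$, apply the Bramble--Hilbert lemma to the Lagrangian interpolator on $\tref$ in $\P k$, and push forward while tracking the scaling of $\mathrm{D}\ftr$ and $J_{\ftr}$ inherited from Proposition~\ref{prop-Gh}. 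Summing over the mesh yields the local estimate
$$\|v^{-\ell}-\Ihr v^{-\ell}\|_{\Hexpo{j}(\omhr)} \le c\,h^{m-j}\,|v^{-\ell}|_{\Hexpo{m}(\omhr)},$$
which is the standard isoparametric interpolation bound (see \cite{PHcia,EG}).

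The surface contribution is handled analogously. Proposition~\ref{prop:trace} identifies the trace of the volume lift with the surface lift, so that the trace of $v-\Ihlifte v$ on $\Gamma$ corresponds to $v^{-\ell}-\Ihr v^{-\ell}$ on $\ghr$ through the surface lift associated to the orthogonal projection $b$. Since $\Jb$ and $\Jblifte$ are uniformly bounded with $\|1-1/\Jblifte\|_{\Linf(\Gamma)}\le c h^{r+1}$ by~\eqref{ineq:AhJh}, and since $\Ahell$ is a bounded perturbation of $P$ again controlled by~\eqref{ineq:AhJh}, the $\Hexpo{j}(\ghr)$ and $\Hexpo{j}(\Gamma)$ norms are equivalent up to constants independent of $h$. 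I would then apply the standard $\P k$-Lagrange interpolation estimate on the $(d-1)$-dimensional curved faces, again through a pull-back to the reference face, to obtain the analogous bound $h^{m-j}|v^{-\ell}|_{\Hexpo{m}(\ghr)}$ and convert the right-hand semi-norm back onto $\Gamma$.

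The main technical obstacle is the last step of each part: bounding $|v^{-\ell}|_{\Hexpo{m}(\omhr)}$ and the surface analogue in terms of $\|v\|_{\Hexpo{m}\omgam}$, uniformly in $h$. Since $v^{-\ell}=v\circ\Ghr$, controlling its $m$-th order derivatives requires estimates on derivatives of $\Ghr$ up to order $m$ on each element. These are furnished by the piecewise $\c{r+1}$ regularity of $\Ghr$ from Proposition~\ref{prop-Gh} (and Remark~\ref{rem:lift-regularity}) combined with Faà di Bruno's formula, provided $m\le r+1$. This is consistent with the range $2\le m\le k+1$ in the natural regime where $k\le r$; for $k>r$, the interpolation inequality degenerates in the expected way, which is why the final \textit{a priori} estimate in Section~\ref{ERROR-section} naturally exhibits both exponents $h^{k+1}$ and $h^{r+1}$.
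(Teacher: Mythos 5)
Your outline follows essentially the same route as the paper: reduce to the mesh via the norm equivalence of Remark~\ref{rem:norm-equiv} and the identity $\Ihlifte v = (\Ihr(v^{-\ell}))^\ell$, apply interpolation theory on the curved mesh, and return to $\omgam$. The only structural difference is that you re-derive the mesh interpolation bound by pull-back to $\tref$ and Bramble--Hilbert, whereas the paper simply cites \cite[Corollary 4.1]{Bernardi1989} for the volume norms and \cite{D1,D2} for the surface norms; note in passing that the scalings of $\diff\ftr$ and of its Jacobian used in that pull-back come from Ciarlet--Raviart-type estimates on $\ftr$ (as in \cite{ciaravtransf}), not from Proposition~\ref{prop-Gh}, which concerns $\Ghr$.

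The real divergence is in your last step, and it leaves a gap relative to the statement. You only control $\|v^{-\ell}\|_{\Hexpo{m}(\omhr,\ghr)}$ by $\|v\|_{\Hexpo{m}\omgam}$ for $m\le r+1$, and you then assert that for $k>r$ the inequality ``degenerates'', which contradicts the proposition as stated: it claims the bound for all $2\le m\le k+1$ with no constraint linking $m$ to $r$, and it is used in exactly that generality (with $m=k+1$) in the $\HH1$ and $\L2$ error proofs, and numerically with $k=4$ on affine meshes. Moreover, your suggested explanation is not the paper's logic: the $h^{r+1}$ and $h^{r+1/2}$ contributions in Theorem~\ref{th-error-bound} arise from the geometric consistency error $(a-\ahell)$ via \eqref{ineq:a-al2}, not from any weakening of the interpolation estimate. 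The paper closes the step you flag by directly asserting $\|v^{-\ell}\|_{\Hexpo{m}(\omhr,\ghr)}\le c\|v\|_{\Hexpo{m}\omgam}$ (with the cited references), so to prove the proposition as stated you would need to establish this $\Hexpo{m}$-stability of $v\mapsto v\circ\Ghr$ and of the surface inverse lift for all $m\le k+1$, not just $m\le r+1$. You have in fact put your finger on a genuine subtlety that the paper glosses over (the chain rule involves derivatives of $\Ghr$ beyond its guaranteed piecewise $\c{r+1}$ regularity when $k>r$), but as written your argument proves a weaker statement than the one claimed, and the concluding justification for the excluded range is incorrect in the context of the paper's error analysis.
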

\begin{proof}
This inequality derives from given interpolation theory, see~\cite[Corollary~4.1]{Bernardi1989} and \cite{Brenner-scott} for norms over $\Omega$ and ~\cite{D1,D2} for norms over $\Gamma$. One also needs to use the following inequality, $ \|v^{-\ell}\|_{\Hexpo{m}(\tr)} \le  c \|v\|_{\Hexpo{m}(\te)},$ for $0 \le  m \le k+1,$ where the constant $c$ is independent of $h$. This inequality follows from  a change of variables and the fact that  $\diff^m \Ghr = \I + \diff^m (\rhotr \circ (\ftr)^{-1}) $ is locally bounded independently of $h$, which is easily proved using \cite[page 19]{ciaravtransf} and~\eqref{ineq:rho_tr}. 
\end{proof}

\subsection{Finite element formulation}
From now on, to simplify the notations, we denote~$\omhh$ and $\ghh$ to refer to~$\omhr$ and~$\ghr$, for any geometrical order $r\ge 1$. 

\paragraph{Discrete formulation.}
Given $f \in \L2(\Omega)$ and $g \in \L2(\Gamma)$ the right hand side of Problem~\eqref{1}, we define (following \cite{elliott,D1}) the following linear form $l_h$ on $\Vh$ by,
$$
    l_h(v_h) := \int_{\Omega _h} v_h f^{-\ell}\Jh  \, \d x + \int_{\ghh} v_h g^{-\ell}\Jb \, \d\sigma_h,
$$
where $\Jh$ (resp. $\Jb$) is the Jacobin of $\Ghr$ (resp. the orthogonal projection $b$). 
With this definition,~$l_h(v_h)=l(v_h^\ell)$, for any $v_h \in \Vh$, where $l$ is the right hand side in the formulation \eqref{fv_faible}. 

The approximation problem is to find~$u_h \in \mathbb{V}_h$ such that, 
\begin{equation}
\label{fvh}
     a_h(u_h,v_h) = l_h(v_h), \ \ \ \ \ \forall \ v_h \in \Vh,
\end{equation}
where $a_h$ is the following bilinear form, defined on $\Vh \times \Vh$,
\begin{align*}
    a_h(u_h,v_h) &:= \int_{\Omega _h} \nabla u_h \cdot \nabla v_h \, \d x + \kappa \int_{\Omega _h} u_h  v_h \, \d x  +\beta \int_{\ghh} \nabla_{\ghh} u_h \cdot \nabla_{\ghh} v_h \, \d\sigma_h 
     + \alpha \int_{\ghh} u_h v_h \, \d\sigma_h.
\end{align*}
\begin{remark}
Since $a_h$ is bilinear symmetric positively defined on a finite dimensional space, then there exists a unique solution $u_h \in \Vh$ to the discrete problem~\eqref{fvh}. 

\end{remark}

\paragraph{Lifted discrete formulation.}
We define the lifted bilinear form $\ahell$, defined on $\Vhlifte \times \Vhlifte$, throughout, $$a^\ell_h(u^\ell_h,v^\ell_h)=a_h(u_h,v_h) \quad  \mbox{ for } u_h, v_h \in \Vh,$$ 
applying \eqref{pass_grad_volume}, \eqref{pass_fct_scalaire_volume}, \eqref{pass_grad_surface} and~\eqref{pass_fct_scalaire_surface}, its expression is given by, 
\begin{multline*}
    \ahell(u^\ell_h,v^\ell_h) = \int_{\Omega}  \G (\nabla u_h^\ell) \cdot \G (\nabla v_h^\ell) \frac{\, \d x}{\Jhlifte}+\beta \int_{\Gamma} A^{\ell }_h \nabla_{\Gamma} u^{\ell }_h \cdot \nabla_{\Gamma} v^{\ell }_h  \, \d\sigma \\
     + \kappa \int_{\Omega} (u_h)^\ell (v_h)^\ell \frac{\, \d x}{\Jhlifte}  + \alpha \int_{\Gamma} (u_h)^\ell (v_h)^\ell \frac{\, \d\sigma}{\Jblifte}.
\end{multline*}

Keeping in mind that $u$ is the solution of \eqref{fv_faible} and $u_h^\ell$ is the lift of the solution of \eqref{fvh}, for any $v_h^\ell \in \Vhlifte \subset \HH1 \omgam$, we notice that,
\begin{equation}
\label{rem:a=ahell-For-vhell}
  a(u,v_h^\ell) = l(v_h^\ell)=  l_h(v_h)= a_h(u_h,v_h) = a^\ell_h(u^\ell_h,v^\ell_h).
\end{equation} 
Using the previous points, we can also define the lifted formulation of the discrete problem \eqref{fvh} by: find $u_h^\ell \in \mathbb{V}_h^\ell$ such that,
$$
    \ahell(u^\ell_h,v^\ell_h)= l(v_h^\ell), \ \  \ \ \ \forall \ v_h^\ell \ \in \mathbb{V}_h^\ell.
$$
%
%
%
%
%
%
\section{Error analysis}
\label{ERROR-section}
Throughout this section, we consider that the mesh size~$h$ is sufficiently small and that $c$ refers to a positive constant independent of the mesh size~$h$. From now on, the domain~$\Omega$, is assumed to be at least~$\c {k+1}$ regular, and the source terms in problem \eqref{1}  are assumed more regular: $f \in \mathrm{H}^{k-1}(\Omega)$ and $g \in \mathrm{H}^{k-1}(\Gamma)$. Then according to~\cite[Theorem~3.4]{ventcel1}, the exact solution $u$ of Problem~\eqref{1} is in~$\Hk1\omgam$. \medskip

Our goal in this section is to prove the following theorem. 
%
%
%
%
\begin{theorem}
\label{th-error-bound}
Let $u \in \Hk1\omgam$ be the solution of the variational problem \eqref{fv_faible} and $u_h \in \Vh$ be the solution of the finite element formulation~\eqref{fvh}. There exists a constant $c > 0$ such that for a sufficiently small mesh size $h$,
\begin{equation}
\label{errh1_errl2}
    \|u-u_h^\ell \|_{\HH1(\Omega, \Gamma) } \le c ( h^k + h^{r+1/2})
\quad \mbox{ and } \quad
    \|u-u_h^\ell \|_{\L2(\Omega, \Gamma) } \le c ( h^{k+1} + h^{r+1}),
\end{equation}
%
%
%
where $u_h^\ell \in \Vhlifte$ denotes the lift of $u_h$ onto $\Omega$, given in Definition \ref{def:liftvolume}.
\end{theorem}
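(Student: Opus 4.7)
The plan is to follow a classical Strang--Céa argument for the $\HH1\omgam$ estimate and then use an Aubin--Nitsche duality for the $\L2\omgam$ estimate. Both steps will reduce to controlling (i) the interpolation error (giving the $h^k$ term) and (ii) a \emph{geometric consistency error} of the form $(a-a_h^\ell)(v,w)$ (giving the $h^{r+1/2}$ and $h^{r+1}$ terms).

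First, I would establish uniform coercivity of $a_h^\ell$ on $\Vhlifte$ (with respect to $\|\cdot\|_{\HH1\omgam}$). This follows from the coercivity of $a$ together with the perturbation bounds~\eqref{ineq:AhJh} and~\eqref{ineq:Ghr-Id_1/Jh-1}, which show that $\ahell$ differs from $a$ by an $O(h^r)$ operator in the relevant norms, so for $h$ small enough coercivity is preserved. Next, for the $\HH1$ estimate, I would split $u-u_h^\ell = (u-\Ihlifte u)-(u_h^\ell-\Ihlifte u)$ and, using uniform coercivity, reduce $\|u_h^\ell-\Ihlifte u\|_{\HH1\omgam}$ to bounding
\[
   \ahell(u_h^\ell-\Ihlifte u,v_h^\ell) \;=\; a(u-\Ihlifte u,v_h^\ell) \;+\; (a-\ahell)(\Ihlifte u,v_h^\ell),
\]
where I used the identity~\eqref{rem:a=ahell-For-vhell}. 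The first summand is bounded by $ch^k\|u\|_{\Hk1\omgam}\|v_h^\ell\|_{\HH1\omgam}$ via Proposition~\ref{prop:interpolation-ineq}. For the second, I would write $\Ihlifte u = u + (\Ihlifte u-u)$; the term $(a-\ahell)(\Ihlifte u-u,v_h^\ell)$ is harmless ($O(h^{k+r})$), and the main work is the geometric residual $(a-\ahell)(u,v_h^\ell)$.

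The key observation, and the source of the delicate $h^{r+1/2}$ exponent, is that on every \emph{internal} element one has $\Ghr=\I$, so $\G-\I$ and $J_h^\ell-1$ vanish outside a boundary layer $S_h\subset\Omega$ of width $O(h)$. Combining the pointwise bound $\|\G-\I\|_{\Linf}\le ch^r$ from~\eqref{ineq:Ghr-Id_1/Jh-1} with the layer estimate $\|w\|_{\L2(S_h)}\le ch^{1/2}\|w\|_{\HH1(\Omega)}$ (for $w\in\HH1(\Omega)$, via a normal coordinates / trace argument), applied to $\na u\in\HH1(\Omega)$ (since $u\in\Hk1\omgam$ with $k\ge1$), the volume residual is controlled by $ch^{r+1/2}\|u\|_{\H2(\Omega)}\|v_h^\ell\|_{\HH1(\Omega)}$. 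The surface residual uses~\eqref{ineq:AhJh} directly to give the better rate $h^{r+1}$. This yields the $\HH1$ bound $c(h^k+h^{r+1/2})$.

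Finally, for the $\L2$ estimate I would invoke the Aubin--Nitsche dual problem: find $z\in\HH1\omgam$ with $a(v,z)=(u-u_h^\ell,v)_{\L2\omgam}$ for all $v$, which by elliptic regularity for the Ventcel problem satisfies $\|z\|_{\H2\omgam}\le c\|u-u_h^\ell\|_{\L2\omgam}$. Taking $v=u-u_h^\ell$ and inserting $\Ihlifte z$, I obtain
\[
\|u-u_h^\ell\|_{\L2\omgam}^2 = a(u-u_h^\ell,z-\Ihlifte z) + (\ahell-a)(u_h^\ell,\Ihlifte z).
\]
The first term is bounded by $ch\cdot(h^k+h^{r+1/2})\|z\|_{\H2\omgam}$. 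For the second term, the layer estimate can now be applied \emph{on both sides} because $z\in\H2$: writing $\Ihlifte z=z+(\Ihlifte z-z)$ gives $\|\na\Ihlifte z\|_{\L2(S_h)}\le ch^{1/2}\|z\|_{\H2\omgam}$, and splitting $u_h^\ell=u+(u_h^\ell-u)$ as before yields the sharper bound $ch^{r+1}\|z\|_{\H2\omgam}$. Dividing by $\|u-u_h^\ell\|_{\L2\omgam}$ gives the announced rate $c(h^{k+1}+h^{r+1})$. The main technical obstacle throughout is exactly the layer estimate, which both crucially relies on $\mathcal{G}_h^{(r)}=\I$ on internal elements (i.e. on the choice of lift made in Section~\ref{sec:lift-def-surf-vol}, \emph{not} the former lift of Remark~\ref{rem:lift-elliott-trace-non-conserve}) and on sufficient regularity of $u$ so that $\na u$ admits a trace.
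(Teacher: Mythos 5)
Your proposal is correct and follows essentially the same route as the paper: a Strang-type argument in which the geometric consistency error $(a-\ahell)$ is localized to the boundary layer $B_h^\ell$ and combined with the $h^{1/2}$ layer estimate \eqref{h1/2_blh}, the interpolation bound of Proposition \ref{prop:interpolation-ineq}, and then an Aubin--Nitsche duality using the same splittings $u_h^\ell=u+(u_h^\ell-u)$ and $\Ihlifte z=z+(\Ihlifte z-z)$ to reach $h^{r+1}$. The only cosmetic difference is that you invoke uniform coercivity of $\ahell$ and place the geometric residual on $(\Ihlifte u,\cdot)$, whereas the paper uses coercivity of $a$, applies the geometric estimate to the discrete pair $(u_h^\ell,\Ihlifte u-u_h^\ell)$, and then absorbs an $O(h^r)$ term with the help of the bound \eqref{rem:bound-uhell-indep-h}; the two bookkeepings are equivalent.
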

%
%
%
%
%
The overall error in this theorem is composed of two components: the geometrical error and the finite element error. To prove these error bounds, we proceed as follows:
\begin{enumerate}
    \item estimate the geometric error: we bound the difference between the exact bilinear form $a$ and the lifted bilinear form $\ahell$;
    \item bound the $\HH1$ error using the geometric and interpolation error estimation, proving the first inequality of \eqref{errh1_errl2};
    \item 
    an Aubin-Nitsche argument helps us prove the second inequality of \eqref{errh1_errl2}.
\end{enumerate}
%
%
%
%
%
\subsection{Geometric error}
First of all, we introduce $B_h^\ell \subset \Omega$ as the union of all the non-internal elements of the exact mesh $\taue$, 
$$
    B_h^\ell=  \{ \ \te \in \taue ; \  \te \, \mbox{has at least two vertices on } \Gamma \}. 
$$
Note that, by definition of $B_h^\ell$, we have,
\begin{equation}
    \label{JDG}
    \frac{1}{\Jhlifte}-1 =0  \ \ \ \mbox{and} \ \ \ \G - \I = 0 \ \ \ \mbox{in} \ \Omega \backslash B_h^\ell.
\end{equation}
The following corollary involving  $B_h^\ell$ is a direct consequence of \cite[Lemma 4.10]{elliott} or~\cite[Theorem~1.5.1.10]{Grisvard2011}.
\begin{corollary}
Let $v \in \HH1(\Omega)$ and $w \in \H2(\Omega)$. Then, for a sufficiently small~$h$, there exists $c>0$ such that the following inequalities hold,
\begin{equation}
\label{h1/2_blh}
    \|v\|_{\L2(B_h^\ell)} \le c h^{1/2} \|v\|_{\HH1(\Omega)} 
    \qquad \mbox{and} \qquad
    \|w\|_{\HH1(B_h^\ell)} \le c h^{1/2} \|w\|_{\H2(\Omega)}.
\end{equation}
\end{corollary}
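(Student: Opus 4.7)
The plan is to observe that $B_h^\ell$ is contained in a thin strip along $\Gamma$ of width $O(h)$, and to combine this inclusion with a standard strip estimate for Sobolev functions. Each non-internal exact element $\te\in\taue$ has at least two vertices on $\Gamma$; by the quasi-uniformity of $\tauh$ together with Proposition~\ref{prop-Gh} (which controls $\diff \Ghr$ and hence keeps $\mathrm{diam}(\te)\le c h$), every $x\in\te$ lies within distance $c h$ of $\Gamma$ for a constant independent of $h$. Consequently
$$B_h^\ell \subset U_{ch}:=\{x\in\Omega:\ \dist(x,\Gamma)<ch\}.$$

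Next I would establish the following strip inequality: for any $v\in\HH1(\Omega)$ and any $\delta\le \delta_\Gamma/2$, with $\delta_\Gamma$ the tubular-neighborhood width of Proposition~\ref{tub_neigh_orth_proj_prop},
$$\|v\|_{\L2(U_\delta)} \le c\,\delta^{1/2}\,\|v\|_{\HH1(\Omega)}.$$
Using normal coordinates $(s,t)\in\Gamma\times[0,\delta]\mapsto s-t\,\nn(s)\in U_\delta$ (a $\c1$-diffeomorphism with Jacobian bounded above and below thanks to the $\c2$ regularity of $\Gamma$), the fundamental theorem of calculus along normal rays yields $v(s,t)=v(s,0)+\int_0^t \partial_\tau v(s,\tau)\,d\tau$ for $v\in\c1(\overline\Omega)$, hence by Cauchy--Schwarz
$$|v(s,t)|^2 \le 2|v(s,0)|^2 + 2\delta\int_0^\delta |\na v(s,\tau)|^2\,d\tau.$$
Integrating over $\Gamma\times[0,\delta]$ with the bounded Jacobian, applying the classical trace inequality $\|v\|_{\L2(\Gamma)}\le c\|v\|_{\HH1(\Omega)}$, and extending by density to all of $\HH1(\Omega)$, yield the strip bound above.

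Setting $\delta=ch$ then delivers the first inequality of the corollary. For the second, I would apply the first inequality with $v$ replaced successively by $w\in\H2(\Omega)\subset\HH1(\Omega)$ and by each $\partial_i w\in\HH1(\Omega)$; summing the squared bounds produces
$$\|w\|_{\HH1(B_h^\ell)}^2 \le c h\Bigl(\|w\|_{\HH1(\Omega)}^2 + \sum_{i=1}^d\|\partial_i w\|_{\HH1(\Omega)}^2\Bigr) \le c h\,\|w\|_{\H2(\Omega)}^2.$$

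The main technical obstacle is the inclusion $B_h^\ell\subset U_{ch}$ with a constant $c$ independent of $h$ and $r$: one must ensure the curved elements $\te$ near $\Gamma$ do not become elongated in the inward normal direction despite the $\P r$-curved geometry. This is precisely what the quasi-uniformity of $\tauh$ together with Proposition~\ref{prop-Gh} guarantee, since $\fte$ differs from $\ft$ by a perturbation of order $h^{r+1}$ and $\ft$ is uniformly non-degenerate by shape-regularity. All remaining ingredients (normal coordinates, strip estimate, trace theorem) are classical and $h$-independent, so the argument reduces to this geometric observation plus a one-dimensional integration along normals.
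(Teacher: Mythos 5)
Your proposal is correct, but it follows a different route than the paper, which offers no argument at all: the corollary is stated there as a direct consequence of \cite[Lemma 4.10]{elliott} or \cite[Theorem 1.5.1.10]{Grisvard2011}. What you have done is essentially reconstruct the proof behind those citations in a self-contained way: first the geometric inclusion $B_h^\ell\subset\{x\in\Omega:\ \dist(x,\Gamma)<ch\}$ with $c$ independent of $h$, which indeed follows because every non-internal exact element has a vertex on $\Gamma$ and diameter $O(h)$ (quasi-uniformity of the affine mesh, $\mathrm{diam}(T^{(r)})\le ch$, and the uniform control of $\diff\Ghr$ from Proposition~\ref{prop-Gh}); then the classical strip estimate $\|v\|_{\mathrm{L}^2(U_\delta)}\le c\,\delta^{1/2}\|v\|_{\mathrm{H}^1(\Omega)}$ for $\delta$ below the tubular-neighborhood width, proved in normal coordinates by one-dimensional integration along normal rays, Cauchy--Schwarz, the trace inequality and density (this is precisely the content of Grisvard's theorem); and finally the application of that estimate to $w$ and to each $\partial_i w$ to obtain the $\mathrm{H}^1(B_h^\ell)$ bound, exactly as in Elliott--Ranner. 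Your version buys a self-contained argument whose only problem-specific ingredient is the uniform diameter bound for the curved exact elements, which you correctly isolate and justify; the paper's version buys brevity at the price of outsourcing both the strip estimate and the inclusion to the literature. I see no gap: the restriction to sufficiently small $h$ (so that $ch\le\delta_\Gamma/2$ and the normal parametrization is a diffeomorphism with bounded Jacobian) is the only smallness requirement, and you state it.
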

%
%
%
%
%
%
%
%
%
%
%
%
%
%
The difference between $a$ and $a_h$, referred to as the geometric error, is evaluated in the following proposition.
\begin{proposition}
Consider $v, w \in \mathbb{V}_h^\ell$. Then for a sufficiently small~$h$, there exists $c>0$, such that the following geometric error estimation hold,
\begin{equation}
    \label{ineq:a-al2}
    |a(v,w)-\ahell(v,w)| \le c h^r \|\na v\|_{\L2(B_h^\ell)}\|\na w\|_{\L2(B_h^\ell)} + ch^{r+1}\|v\|_{\HH1 \omgam}\|w\|_{\HH1 \omgam}. 
    \end{equation}
\end{proposition}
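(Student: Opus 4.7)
The plan is to subtract the two bilinear forms term by term and bound each piece separately, distinguishing the two volume terms (which are supported in $B_h^\ell$) from the two surface terms (which we control directly via~\eqref{ineq:AhJh}).

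First I would write the integrand-wise difference
\begin{equation*}
a(v,w)-\ahell(v,w) = I_1 + \kappa I_2 + \beta I_3 + \alpha I_4,
\end{equation*}
where
$I_1 := \int_\Omega \big(\na v\cdot\na w - \tfrac{1}{\Jhlifte}\G(\na v)\cdot \G(\na w)\big)\,\d x$,
$I_2 := \int_\Omega \big(1-\tfrac{1}{\Jhlifte}\big) v w\,\d x$,
$I_3 := \int_\Gamma (P - \Ahell)\nt v\cdot \nt w\,\d\sigma$, and
$I_4 := \int_\Gamma\big(1-\tfrac{1}{\Jblifte}\big) v w\,\d\sigma$ (using $\nt v = P\nt v$).

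For the volume terms $I_1$ and $I_2$, the key observation is \eqref{JDG}: both integrands vanish outside of $B_h^\ell$, so all integrals reduce to integrals over $B_h^\ell$. In $I_1$, I would expand
\begin{equation*}
\na v\cdot\na w - \tfrac{1}{\Jhlifte}\G(\na v)\cdot \G(\na w)
= \big(1-\tfrac{1}{\Jhlifte}\big)\na v\cdot\na w + \tfrac{1}{\Jhlifte}\big[(\I-\G)\na v\cdot\na w + \G\na v\cdot(\I-\G)\na w\big],
\end{equation*}
and apply the pointwise bounds $\|\G-\I\|\le c h^r$ and $|1/\Jhlifte - 1|\le c h^r$ from~\eqref{ineq:Ghr-Id_1/Jh-1}, together with the boundedness of $\G$ and $1/\Jhlifte$ (Remark on boundedness). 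Cauchy-Schwarz then yields the contribution $c h^r \|\na v\|_{\L2(B_h^\ell)}\|\na w\|_{\L2(B_h^\ell)}$, which is exactly the first term in~\eqref{ineq:a-al2}. For $I_2$, the same bound on $1/\Jhlifte - 1$ gives $c h^r \|v\|_{\L2(B_h^\ell)}\|w\|_{\L2(B_h^\ell)}$; at this point I would invoke the first inequality of \eqref{h1/2_blh} to absorb a factor $h^{1/2}$ on each factor and obtain $c h^{r+1}\|v\|_{\HH1(\Omega)}\|w\|_{\HH1(\Omega)}$.

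The surface terms $I_3$ and $I_4$ are straightforward consequences of the already proved estimates~\eqref{ineq:AhJh}: pulling the $\Linf$ bounds out of the integrals and applying Cauchy-Schwarz gives $|I_3|\le c h^{r+1}\|\nt v\|_{\L2(\Gamma)}\|\nt w\|_{\L2(\Gamma)}$ and $|I_4|\le c h^{r+1}\|v\|_{\L2(\Gamma)}\|w\|_{\L2(\Gamma)}$, both of which are absorbed in the second term of~\eqref{ineq:a-al2}. Summing $I_1,\ldots,I_4$ yields the stated bound. The only slightly delicate step is the algebraic splitting of the gradient integrand in $I_1$: it is essential not to apply~\eqref{h1/2_blh} there, since we need to preserve the $\L2(B_h^\ell)$ norms of the gradients in the first term of~\eqref{ineq:a-al2} (this sharper term is what will later allow the $\HH1$ error estimate to come out as $h^k+h^{r+1/2}$ instead of $h^k+h^r$).
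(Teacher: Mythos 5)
Your plan is correct and follows essentially the same route as the paper's proof: the same four-term decomposition, the same use of \eqref{JDG} to localize the volume terms to $B_h^\ell$, the pointwise bounds \eqref{ineq:Ghr-Id_1/Jh-1} and \eqref{ineq:AhJh} with Cauchy--Schwarz, and \eqref{h1/2_blh} applied only to the zero-order volume term so that the sharper $h^r\|\na v\|_{\L2(B_h^\ell)}\|\na w\|_{\L2(B_h^\ell)}$ contribution is preserved. The algebraic splitting of the gradient integrand differs from the paper's only in how the three difference terms are grouped, which is immaterial.
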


The following proof is inspired by \cite[Lemma 6.2]{elliott}. The main difference is the use of the modified lift given in definition \ref{def:liftvolume} and the corresponding transformation $\Ghr$ alongside its associated matrix~$\G$, defined in \eqref{eq:matrice-Ghr}, which leads to several changes in the proof. 

\begin{proof}
Let $v, w \in \mathbb{V}_h^\ell$. 
By the definitions of the bilinear forms $a$ and $\ahell$, we have,
$$
     |a(v,w)-\ahell(v,w)| \le a_1(v,w) +\kappa a_2(v,w) +\beta a_3(v,w)+ \alpha a_4(v,w),
$$
where the terms $a_i$, defined on ${\Vhlifte}\times\Vhlifte$, are respectively given by,
%
\begin{equation*}
\begin{array}{rclrcl}
     \!\!a_1(v,w) &\!\!\!\!\!:=\!\!\!\!\!& \displaystyle \left|\int_{\Omega} \nabla w \cdot \nabla v - \G \nabla w \cdot \G \nabla v \frac{1}{\Jhlifte}\, \d x\right|, & 
     \!\!a_2(v,w) &\!\!\!\!\!:=\!\!\!\!\!& \displaystyle \left|\int_{\Omega}  w  v \ (1- \frac{1}{\Jhlifte}) \, \d x\right|, \!\!\\
    \!\!a_3(v,w) &\!\!\!\!\!:=\!\!\!\!\!& \displaystyle \left|\int_{\Gamma} (\Ahell - \I) \ \nabla_{\Gamma} w \cdot \nabla_{\Gamma} v \, \d\sigma\right|, & 
     \!\!a_4(v,w) &\!\!\!\!\!:=\!\!\!\!\!& \displaystyle  \left|\int_{\Gamma} w  v \ (1- \frac{1}{\Jblifte})\, \d\sigma\right|.\!\!
\end{array}
\end{equation*}
The next step is to bound each~$a_i$, for $i=1 ,2 ,3, 4$, while using  
\eqref{ineq:Ghr-Id_1/Jh-1}
and \eqref{ineq:AhJh}. \medskip

First of all, notice that 
$
    a_1(v,w) \le Q_1 +Q_2+Q_3
$, 
where, 
\begin{eqnarray*}
     Q_1 &:=& \displaystyle \left|\int_{\Omega} ( \G -\I) \  \nabla w \cdot \G \nabla v \frac{1}{\Jhlifte}\, \d x\right|,\\
     Q_2 &:=& \displaystyle \left|\int_{\Omega} \nabla w \cdot ( \G -\I)\nabla v \frac{1}{\Jhlifte}\, \d x\right|,\\
     Q_3 &:=& \displaystyle \left|\int_{\Omega} \nabla w \cdot \nabla v (\frac{1}{\Jhlifte}-1) \, \d x\right|.
\end{eqnarray*}
We use \eqref{JDG}
and \eqref{ineq:Ghr-Id_1/Jh-1} 
to estimate each $Q_j$ as follows,
\begin{align*}
    & Q_1 = \left|\int_{B_h^\ell} ( \G -\I) \  \nabla w \cdot \G \nabla v \frac{1}{\Jhlifte}\, \d x\right|\le ch^r \| \nabla w\|_{\L2(B_h^\ell)} \| \nabla v\|_{\L2(B_h^\ell)},\\
    & Q_2 = \left|\int_{B_h^\ell} \nabla w \cdot ( \G -\I)\nabla v \frac{1}{\Jhlifte}\, \d x\right| \le ch^r \| \nabla w\|_{\L2(B_h^\ell)} \| \nabla v\|_{\L2(B_h^\ell)},\\
    & Q_3 = \left|\int_{B_h^\ell} \nabla w \cdot \nabla v (\frac{1}{\Jhlifte}-1) \, \d x \right| \le ch^r \| \nabla w\|_{\L2(B_h^\ell)} \| \nabla v\|_{\L2(B_h^\ell)}.
\end{align*}
Summing up the latter terms, we get,
$a_1(v,w) \le ch^r \| \nabla w\|_{\L2(B_h^\ell)} \| \nabla v\|_{\L2(B_h^\ell)}.$ \medskip

Similarly, to bound $a_2$, we proceed by using \eqref{JDG} and 
\eqref{ineq:Ghr-Id_1/Jh-1} 
as follows,
\begin{align*}
    a_2(v,w)&  = \left|\int_{B_h^\ell}  w  v \ (1- \frac{1}{\Jhlifte}) \, \d x\right|  \le c h^r \|  w\|_{\L2(B_h^\ell)}\|  v\|_{\L2(B_h^\ell)}.
\end{align*}
Since $v,w \in~\mathbb{V}_h^\ell \subset \HH1\omgam$, we use 
\eqref{h1/2_blh} 
to get,
$$
    a_2(v,w) \le c h^{r+1} \|w\|_{\HH1(\Omega)}\|  v\|_{\HH1(\Omega)}.
$$

Before estimating $a_3$, we need to notice that, by definition of the tangential gradient over~$\Gamma$, $P \nt = \nt $ 
where~$P=\I-\nn \otimes \nn$ is the orthogonal projection over the tangential spaces of $\Gamma$. 
With the estimate 
\eqref{ineq:AhJh}, we get,
\begin{align*}
   a_3(v,w)  
     & = \left| \int_{\Gamma} (\Ahell - P) \ \nabla_{\Gamma} w \cdot \nabla_{\Gamma} v \, \d\sigma \right|  \\
   & \le ||\Ahell -P||_{\Linf(\Gamma)} \| w\|_{\HH1(\Gamma)} \| v\|_{\HH1(\Gamma)} 
     \le ch^{r+1} \| w\|_{\HH1(\Gamma)} \| v\|_{\HH1(\Gamma)} .
\end{align*}

Finally, using 
\eqref{ineq:AhJh}, we estimate $a_4$ as follows,
\begin{align*}
   a_4(v,w) = \left| \int_{\Gamma} w  v \ (1- \frac{1}{\Jblifte})\, \d\sigma\right|  \le ch^{r+1} \|  w\|_{\L2(\Gamma)}\|  v\|_{\L2(\Gamma)}.
\end{align*}

The inequality \eqref{ineq:a-al2} is easy to obtain when summing up $a_i$, for all $i=1, 2, 3, 4$.
\end{proof}
%
%
%
%
%
%
%
%
%
%
%
%
%
%
%
%
%
\begin{remark}
Let us point out that, with $u$ (resp. $u_h$) the solution of the problem~\eqref{fv_faible} (resp. \eqref{fvh}), we have, 
\begin{equation}
    \label{rem:bound-uhell-indep-h}
     \|u_h^\ell \|_{\HH1 \omgam} \le c\|u \|_{\HH1 \omgam},
\end{equation}
where $c>0$ is independent with respect to $h$.
In fact, a relatively easy way to prove it is by employing the geometrical error estimation \eqref{ineq:a-al2}, as follows,
\begin{align*}
   c_c \|u_h^\ell \|_{\HH1 \omgam}^2  \le a(u_h^\ell,u_h^\ell)
   \le a(u_h^\ell,u_h^\ell)-a(u,u_h^\ell) + a(u,u_h^\ell),
\end{align*}
where $c_c$ is the coercivity constant. Using \eqref{rem:a=ahell-For-vhell}, we have,
\begin{equation*}
    c_c \|u_h^\ell \|_{\HH1 \omgam}^2 \le a(u_h^\ell,u_h^\ell)-a_h^\ell(u_h^\ell,u_h^\ell) + a(u,u_h^\ell)
    = (a-a_h^\ell)(u_h^\ell,u_h^\ell) + a(u,u_h^\ell).
\end{equation*}
Thus applying the estimation \eqref{ineq:a-al2} along with the continuity of $a$, we get,
\begin{align*}
    \|u_h^\ell \|_{\HH1 \omgam}^2  &\le c h^r \|\na u_h^\ell\|^2_{\L2(B_h^\ell)}+ ch^{r+1}\|u_h^\ell\|_{\HH1 \omgam}^2+ c\|u \|_{\HH1 \omgam} \| u_h^\ell\|_{\HH1 \omgam} \\
    & \le c h^r \|u_h^\ell\|^2_{\HH1 \omgam}+ c\|u \|_{\HH1 \omgam} \| u_h^\ell\|_{\HH1 \omgam}.
\end{align*}
Thus, we have, 
$$
   (1-ch^r) \|u_h^\ell \|_{\HH1 \omgam}^2
    \le c\|u \|_{\HH1 \omgam} \| u_h^\ell\|_{\HH1 \omgam}.
$$
For a sufficiently small $h$, we have $1-ch^r>0$, which concludes the proof.
\end{remark}
\subsection{Proof of the $\HH1$ error bound in Theorem \ref{th-error-bound}}

Let $u \in \Hk1 \omgam$ and~$u_h \in \Vh$ be the respective solutions of \eqref{fv_faible} and \eqref{fvh}. 

To begin with, 
we use the coercivity of the bilinear form $a$ to obtain, denoting~$c_c$ as the coercivity constant,
\begin{multline*}
    c_c \|\Ihlifte u-u_h^\ell\|_{\HH1\omgam}^2 \le a(\Ihlifte u-u_h^\ell,\Ihlifte u-u_h^\ell) 
    = a(\Ihlifte u,\Ihlifte u-u_h^\ell)-a(u_h^\ell,\Ihlifte u-u_h^\ell)  \\
     = \ahell(u_h^\ell,\Ihlifte u-u_h^\ell)-a(u_h^\ell,\Ihlifte u-u_h^\ell) +a(\Ihlifte u,\Ihlifte u-u_h^\ell) 
     - \ahell(u_h^\ell,\Ihlifte u-u_h^\ell),
\end{multline*}
where in the latter equation, we added and subtracted $\ahell(u_h^\ell,\Ihlifte u-u_h^\ell)$. Thus,
\begin{equation*}
   c_c \|\Ihlifte u-u_h^\ell\|_{\HH1\omgam}^2 \le \big( \ahell-a\big)(u_h^\ell,\Ihlifte u-u_h^\ell) +a(\Ihlifte u,\Ihlifte u-u_h^\ell) - \ahell(u_h^\ell,\Ihlifte u-u_h^\ell).
\end{equation*}
Applying \eqref{rem:a=ahell-For-vhell} with $v=\Ihlifte u -u_h^\ell \in \Vhlifte$, we have,
$$
  c_c \|\Ihlifte u-u_h^\ell\|_{\HH1\omgam}^2 \le | (\ahell-a)(u_h^\ell,\Ihlifte u-u_h^\ell)| + |a(\Ihlifte u -u ,\Ihlifte u-u_h^\ell)|.
$$
Taking advantage of the continuity of $a$ 
and the estimate \eqref{ineq:a-al2}, we obtain,
\begin{equation*}
\begin{array}{l}
 c_c \|\Ihlifte u-u_h^\ell\|_{\HH1\omgam}^2 \\[0.1cm]
 \begin{array}{rcl}
 & \le & \!\!\!\displaystyle c \big(  h^r \|\na u_h^\ell\|_{\L2(B_h^\ell)}\|\na (\Ihlifte u-u_h^\ell)\|_{\L2(B_h^\ell)}  + h^{r+1}\|u_h^\ell\|_{\HH1\omgam}\|\Ihlifte u-u_h^\ell\|_{\HH1\omgam}\big) \!\!\! \\[0.1cm]
  & & \displaystyle  \qquad{ }  + c_{cont} \|\Ihlifte u -u\|_{\HH1\omgam} \|\Ihlifte u-u_h^\ell\|_{\HH1\omgam} \\[0.1cm]
  & \le &\!\!\! \displaystyle  c \big( h^r \|\na u_h^\ell\|_{\L2(B_h^\ell)} + h^{r+1}\|u_h^\ell\|_{\HH1\omgam}  \\[0.1cm]
  & & \displaystyle  \qquad{ } + c_{cont} \|\Ihlifte u -u\|_{\HH1\omgam} \big) \|\Ihlifte u-u_h^\ell\|_{\HH1\omgam}.
\end{array}
\end{array}
\end{equation*}
%
Then, dividing by $\|\Ihlifte u-u_h^\ell\|_{\HH1\omgam}$, we have,
\begin{align*}
    \|\Ihlifte u-u_h^\ell\|_{\HH1\omgam} & \le c\left( h^r \|\na u_h^\ell\|_{\L2(B_h^\ell)} +  h^{r+1}\|u_h^\ell\|_{\HH1\omgam} + \|\Ihlifte u -u\|_{\HH1\omgam} \right).
\end{align*}
To conclude, we use the latter inequality in the following estimate as follows,
\begin{equation*}
\begin{array}{l}
     \| u-u_h^\ell\|_{\HH1\omgam} 
     \le \| u-\Ihlifte u\|_{\HH1\omgam} + \|\Ihlifte u-u_h^\ell\|_{\HH1\omgam} \\[0.1cm]
       \le  \displaystyle c\left( h^r \|\na u_h^\ell\|_{\L2(B_h^\ell)} +  h^{r+1}\|u_h^\ell\|_{\HH1\omgam} + \|\Ihlifte u -u\|_{\HH1\omgam} \right)  
\end{array} 
\end{equation*}
Using the proposition \ref{prop:interpolation-ineq} and the inequalities \eqref{h1/2_blh}, we have,
\begin{equation*}
\begin{array}{l}
\| u-u_h^\ell\|_{\HH1\omgam} \\[0.1cm]
      \le  \displaystyle c h^r (\|\na (u_h^\ell-u)\|_{\L2(B_h^\ell)}+\|\na u\|_{\L2(B_h^\ell)}) + c h^{r+1}\|u_h^\ell\|_{\HH1\omgam} +   c h^k \|u\|_{\Hk1\omgam}\\[0.1cm]
      \le  \displaystyle c h^r (\|u_h^\ell-u\|_{\HH1 \omgam}+  h^{1/2}\| u\|_{\H2(\Omega)}) + ch^{r+1}\|u_h^\ell\|_{\HH1\omgam}  + c h^k \|u\|_{\Hk1\omgam}.
\end{array}
\end{equation*}
Thus we have,
\begin{align*}
    (1-c h^r) \| u-u_h^\ell\|_{\HH1\omgam} & \le c \left( h^{r+1/2}\| u\|_{\H2(\Omega)} + h^k \|u\|_{\Hk1\omgam} + h^{r+1}\|u_h^\ell\|_{\HH1\omgam}\right). 
\end{align*}
For a sufficiently small $h$, we arrive at,
\begin{align*}
    \| u-u_h^\ell\|_{\HH1\omgam} & \le c \left( h^{r+1/2}\| u\|_{\H2\omgam} + h^k \|u\|_{\Hk1\omgam} + h^{r+1}\|u_h^\ell\|_{\HH1\omgam} \right).
\end{align*}
This provides the desired result using \eqref{rem:bound-uhell-indep-h}.

%
%
%
%
%
%
\subsection{Proof of the $\L2$ error bound in Theorem \ref{th-error-bound}}

Recall that $u \in \HH1\omgam$ is the solution of the variational problem \eqref{fv_faible},~$u_h \in \mathbb{V}_h$ is the solution of the discrete problem \eqref{fvh}. 
To estimate the $\L2$ norm of the error, we define the functional~$F_h$ by,
$$
\fonction{F_h }{\HH1 \omgam  }{\R}{v}{\displaystyle   F_h(v)=a(u-u_h^\ell,v).}
$$
We bound $|F_h(v)|$ for any $v \in \H2 \omgam$ in Lemma \ref{lem:Fh}. Afterwards an Aubin-Nitsche argument is applied to bound the $\L2$ norm of the error.
\begin{lem}
\label{lem:Fh}
For all $v \in \H2\omgam$ {and for a sufficiently small~$h$,} there exists $c>0$ such that the following inequality holds,
\begin{equation}
\label{ineq:F-h}
    |F_h(v)| \le c ( h^{k+1} + h^{r+1} ) \|v\|_{\H2\omgam}.
\end{equation}
\end{lem}
\begin{remark}
To prove Lemma \ref{lem:Fh}, some key points for a function $v \in \H2 \omgam$ are presented. Firstly, inequality 
\eqref{h1/2_blh} implies that, 
\begin{equation}
\label{ineq:blh-for-v}
   \forall \, v \in \H2 \omgam, \quad \|\na v\|_{\L2(B_h^\ell)} \le c h^{1/2} \|v\|_{\H2(\Omega)}.
\end{equation}
Secondly, then the interpolation inequality in proposition \ref{prop:interpolation-ineq} gives, 
\begin{equation}
    \label{ineq:interpolation-v}
   \forall \, v \in \H2 \omgam, \quad  \| \Ihlifte v - v \|_{\HH1 \omgam} \le c h \|v\|_{\H2 \omgam}.
\end{equation}
Applying \ref{rem:a=ahell-For-vhell} for $\Ihlifte v \in \Vhlifte$, we have, 
\begin{equation}
    \label{eq:a=ahell-For-Iv}
   \forall \, v \in \H2 \omgam, \quad  a(u,\Ihlifte v) = l(\Ihlifte v) = \ahell (u_h^\ell, \Ihlifte v).
\end{equation}
\end{remark}
\begin{proof}[Proof of Lemma \ref{lem:Fh}]
Consider $v \in \H2(\Omega, \Gamma)$. We may decompose $|F_h(v)|$ in two terms as follows,
$$
|F_h(v)|  = |a(u-u_h^\ell, v)| 
          \le |a(u-u_h^\ell, v- \Ihlifte v)| + | a(u-u_h^\ell, \Ihlifte v)| 
          =: F_1 + F_2.
$$

Firstly, to bound $F_1$, we take advantage of the continuity of the bilinear form $a$ and apply the~$\HH1$ error estimation \eqref{errh1_errl2}, alongside the inequality~\eqref{ineq:interpolation-v}  as follows, 
\begin{align*}
    F_1  
         &\le c_{cont} \, \| u-u_h^\ell\|_{\HH1 \omgam} \| v-\Ihlifte v \|_{\HH1 \omgam}  
         \le c ( h^k + h^{r+1/2} ) \, h \|v\|_{\H2 \omgam} \\
         & \le c ( h^{k+1} + h^{r+3/2} ) \, \|v\|_{\H2 \omgam}.
\end{align*}

Secondly, to estimate $F_2$, we resort to equations \eqref{eq:a=ahell-For-Iv} and \eqref{ineq:a-al2} as follows,
\begin{multline*}
    F_2 = |a(u, \Ihlifte v)-a(u_h^\ell, \Ihlifte v)| 
         = |\ahell(u_h^\ell, \Ihlifte v)-a(u_h^\ell, \Ihlifte v)| 
         = |(\ahell-a)(u_h^\ell, \Ihlifte v)| \\
         \le c h^r \|\na u_h^\ell\|_{\L2(B_h^\ell)}\|\na (\Ihlifte v)\|_{\L2(B_h^\ell)} +c h^{r+1}\|u_h^\ell \|_{\HH1\omgam} \| \Ihlifte v\|_{\HH1\omgam}.
\end{multline*}
Next, we will treat the first term in the latter inequality separately. We have,
\begin{eqnarray*}
   F_3 & := & h^r \|\na u_h^\ell\|_{\L2(B_h^\ell)}\|\na (\Ihlifte v)\|_{\L2(B_h^\ell)} \\
       & \le & h^r \Big( \|\na (u_h^\ell-u) \|_{\L2(B_h^\ell)} + \|\na u\|_{\L2(B_h^\ell)} \Big) \Big( \|\na (\Ihlifte v-v)\|_{\L2(B_h^\ell)}+\|\na v\|_{\L2(B_h^\ell)} \Big)\\
       & \le & h^r \Big( \| u_h^\ell-u \|_{\HH1 \omgam} + \|\na u\|_{\L2(B_h^\ell)} \Big) \Big( \|\Ihlifte v-v\|_{\HH1 \omgam}+\|\na v\|_{\L2(B_h^\ell)} \Big).
\end{eqnarray*}
We now apply the $\HH1$ error estimation \eqref{errh1_errl2}, the inequality \eqref{ineq:blh-for-v} 
and the interpolation inequality~\eqref{ineq:interpolation-v}, as follows,
\begin{eqnarray*}
   F_3 & \le & c \, h^r \Big( h^k + h^{r+1/2} + h^{1/2} \| u\|_{\H2\omgam} \Big) \Big( h\|v\|_{\H2 \omgam}+h^{1/2}\| v\|_{\H2 \omgam} \Big)\\
   & \le & c \, h^r \, h^{1/2} \Big( h^{k-1/2} + h^{r} + \| u\|_{\H2\omgam} \Big) \Big( h^{1/2}+1\Big) h^{1/2} \| v\|_{\H2 \omgam} \\
    & \le & c \, h^{r+1} \Big( h^{k-1/2} + h^{r} + \| u\|_{\H2\omgam} \Big) \Big( h^{1/2}+1\Big) \| v\|_{\H2 \omgam}.
\end{eqnarray*}
Noticing that $k-1/2>0$ (since $k\ge 1$) and that $\Big( h^{k-1/2} + h^{r} + \| u\|_{\H2\omgam} \Big) \Big( h^{1/2}~+~1~\Big)$ is bounded by a constant independent of $h$, 
we obtain 
$ F_3 \le c \, h^{r+1} \| v\|_{\H2 \omgam}. $
Using the previous expression of $F_2$, 
$$
    F_2  \le  c  h^{r+1} \| v\|_{\H2 \omgam} +c h^{r+1}\|u_h^\ell \|_{\HH1\omgam} \| \Ihlifte v\|_{\HH1\omgam}.
$$
Moreover, noticing that $ \| \Ihlifte v\|_{\HH1\omgam} \le c \| v\|_{\H2\omgam}$, 
$$
    F_2   \le  c  h^{r+1} \| v\|_{\H2 \omgam} +c h^{r+1}\|u_h^\ell \|_{\HH1\omgam}\| v\|_{\H2 \omgam} \le  c h^{r+1} \|  v\|_{\H2\omgam},
$$
using \eqref{rem:bound-uhell-indep-h}. We conclude the proof by summing the estimates of $F_1$ and~$F_2$.
\end{proof}

\begin{proof}[Proof of the $\L2$ estimate \eqref{errh1_errl2}]
Defining $e:=u-u^\ell_h$, the aim is to estimate the following $\L2$ error norm:
$
    \|e\|_{\L2\omgam}^2 = \|u-u^\ell_h\|_{\L2(\Omega)}^2 + \|u-u^\ell_h\|_{\L2(\Gamma)}^2.
$
Let $v\in \L2\omgam$. We define the following problem: find $z_{v} \in~\HH1\omgam$ such that,
\begin{equation}
\label{dual}
    a(w,z_{v}) =\langle w,v \rangle_{\L2\omgam}, \ \ \ \forall \ w \in \HH1\omgam, 
\end{equation}
Applying Theorem~\ref{th_existance_unicite_u} for $f=v$ and $g= v_{|_{\Gamma}}$, 
there exists a unique solution $z_v \in \HH1 \omgam$ to \eqref{dual}, which satisfies the following inequality,
$$
    \|z_{v}\|_{\H2\omgam} \le c \|v\|_{\L2\omgam}.
$$
Taking $v = e \in \L2\omgam$ and~$w = e \in ~\HH1 \omgam$  in \eqref{dual}, we obtain
$ F_h(z_e)=a(e,z_e)= \|e\|_{\L2\omgam}^2$. 
In this case, Theorem \ref{th_existance_unicite_u} implies, 
\begin{equation}
\label{reg}
    \|z_e\|_{\H2\omgam} \le c \|e\|_{\L2\omgam}.
\end{equation}
Applying Inequality \eqref{ineq:F-h} for $z_e \in \H2\omgam$ and afterwards Inequality \eqref{reg}, we have,
$$
    \|e\|_{\L2\omgam}^2=|F_h(z_e)| \le c ( h^{k+1} + h^{r+1} ) \|z_e\|_{\H2\omgam} \le c ( h^{k+1} + h^{r+1} ) \|e\|_{\L2\omgam},
$$
which concludes the proof.
\end{proof}

\section{Numerical experiments}
\label{sec:numerical-ex}
In this section are presented numerical results 
aimed to illustrate the theoretical convergence results
in Theorem \ref{th-error-bound}. Supplementary numerical results will be provided in order to highlight the properties of the volume lift introduced in definition \ref{def:liftvolume} relatively to the lift transformation~$\Ghr:~\omhr \rightarrow \Omega$  given in (\ref{eq:def-fter}). 

\medskip

All the numerical experiments presented here have been done using the finite element library for curved meshes CUMIN \cite{cumin}. Curved meshes of $\Omega$ of geometrical order~$1\le r \le 3$ have been generated using the software Gmsh\footnote{Gmsh: a three-dimensional finite element mesh generator, \url{https://gmsh.info/}. Additionally, all integral computations rely on quadrature rules on the reference elements which are always chosen of sufficiently high order: the integration errors have negligible influence over the forthcoming numerical results. All numerical results presented in this section can be fully reproduced using dedicated source codes available on CUMIN Gitlab\footnote{CUMIN GitLab deposit, \url{https://plmlab.math.cnrs.fr/cpierre1/cumin}}. }

\subsection{The two dimensional case}
The Ventcel problem (\ref{1}) is considered with~$\alpha = \beta = \kappa = 1$ 
on the unit disk $\Omega$,
$$
\arraycolsep=2pt
\left\{
\begin{array}{rcll}
-\Delta u + u &=& f     & \text{ in } \Omega, \\
{- \Delta_{\Gamma} u} + \partial_n u  + u &=& g & \text{ on } \Gamma,
\end{array}
\right.
$$
with the source terms $f(x,y)= - y \mathrm{e}^{x}$ and 
$g(x,y)=  y \mathrm{e}^{x} ( 3 + 4x-y^2 )$
corresponding to the exact solution
$u = -f$.


\medskip

{The numerical solutions $u_h$ are computed 
for $\mathbb{P}^k$ finite elements, with $k=1,\dots, 4$, on series of successively refined meshes of order $r=1,\dots, 3$, as depicted on 
figure~\ref{f1}
for coarse meshes (affine and quadratic). Each mesh counts $10\times 2^{n-1}$ edges on the domain boundary, for~$n=1\dots 7$. On the most refined mesh using a $\P 4$ finite element method, we counted~$10 \times 2^6$ boundary edges and approximately $75\, 500$ triangles. The associated~$\P 4$ finite element space has approximately $605\, 600$ DOF (Degrees Of Freedom). We mention that the computation time is very fast in the present case: total computations roughly last one minute on a simple laptop, which are made really efficient with the direct solver MUMPS\footnote{MUMPS, MUltifrontal Massively Parallel Sparse direct Solver, \url{https://mumps-solver.org/index.php}.} for sparse linear systems.}

%
%

%
%
%

%
%
\begin{figure}[H] 
\centering
    \includegraphics[width=0.25\textwidth]{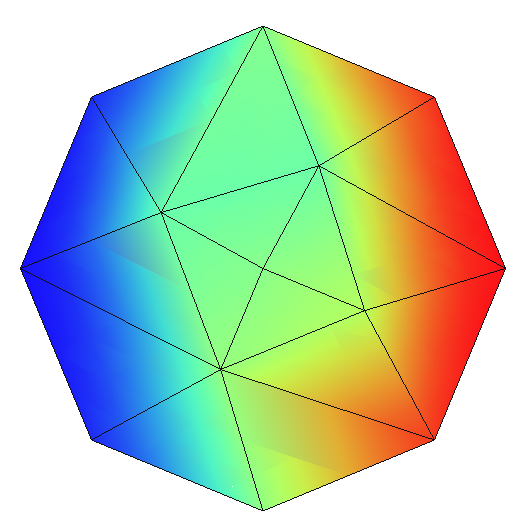}
    ~~~
    \includegraphics[width=0.25\textwidth]{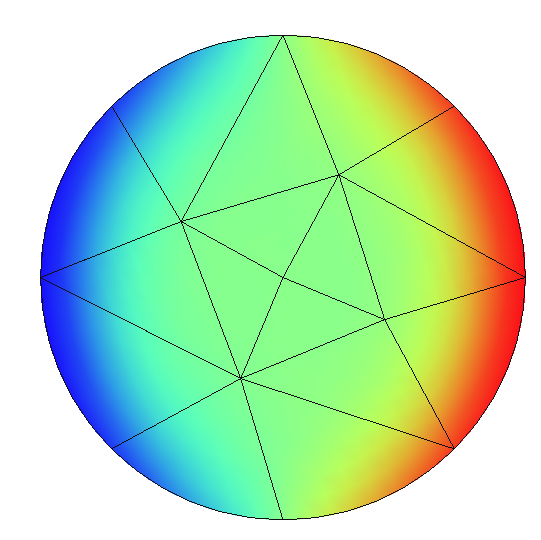}
 \caption{Numerical solution of the  Ventcel problem 
   on affine and quadratic meshes.}
\label{f1}
\end{figure}
In order to validate numerically the latter estimates, for each mesh order $r$ and each finite element degree $k$, the following numerical errors are computed on a series of refined meshes: 
\begin{displaymath}
   \| u-u_h^\ell \|_{\L2(\Omega)}, \quad 
   \| \nabla u- \nabla u_h^\ell \|_{\L2(\Omega) },\quad 
   \| u-u_h^\ell \|_{\L2(\Gamma)} \quad {\rm and}\quad 
   \| \nt u- \nt u_h^\ell \|_{\L2(\Gamma) }.
\end{displaymath}
The convergence orders of these errors,
interpreted in terms of the mesh size, are reported in Table~\ref{tab:conv-1-Omega}~and in Table~\ref{tab:conv-1-Gamma}. For readers convenience, these four errors are plotted with respect to the mesh size $h$ in Figure \ref{fig:error-2d-omega} with volume norms and in Figure~\ref{fig:error-2d-gamma} with surface norms. 

\begin{table}[!ht]
    \centering
    \begin{tabular}{|l||l|l|l|l||l|l|l|l|}
\cline{2-9}
\multicolumn{1}{c||}{}    &  \multicolumn{4}{|c||}{$\| u-u_h^\ell \|_{\L2(\Omega) }$} & \multicolumn{4}{|c|}{$\| \nabla  u-  \nabla  u_h^\ell \|_{\L2(\Omega)}$}  \\[0.08cm]
\cline{2-9}
\multicolumn{1}{c||}{}    &  $\P1$ &  $\P2$ &  $\P3$ &  $\P4$ & $\P1$ &  $\P2$ &  $\P3$ &  $\P4$  \tabularnewline
\hline
 Affine mesh (r=1)   & 
\textcolor{black}{1.98} & \textcolor{black}{1.99} & \textcolor{black}{1.97} & \textcolor{black}{1.97} & 
1.00 & \textcolor{black}{1.50} & \textcolor{black}{1.49} & \textcolor{black}{1.49} \tabularnewline
\hline
Quadratic mesh (r=2) & 
2.01 & 3.14 & \textcolor{black}{3.94} & \textcolor{black}{3.97} & 1.00 & 2.12 & 3.03 & \textcolor{black}{3.48} \tabularnewline
\hline
Cubic mesh (r=3) & 
2.04 & \textcolor{black}{2.45} & \textcolor{black}{3.44} & 4.04 & 
1.02 & \textcolor{black}{1.47} & \textcolor{black}{2.42} & 3.46 \tabularnewline
\hline
\end{tabular}

\caption{
  Convergence orders, interior norms. 
  \label{tab:conv-1-Omega}
}
\end{table}
\begin{figure}[h]
    \begin{center}
    \begin{tabular}{ c c } 
 \includegraphics[width=5cm, height=4cm]{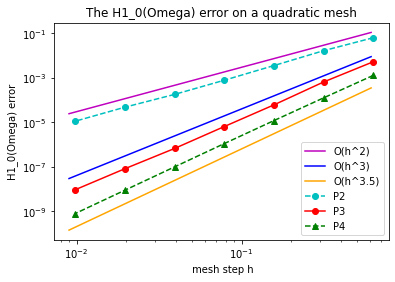} & \includegraphics[width=5cm, height=4cm]{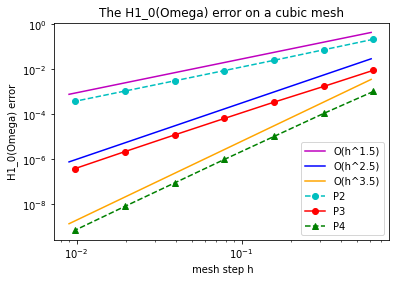}  \\ \includegraphics[width=5cm, height=4cm]{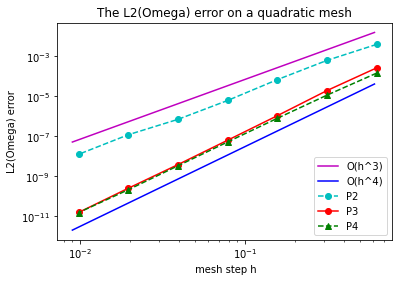} &
 \includegraphics[width=5cm, height=4cm]{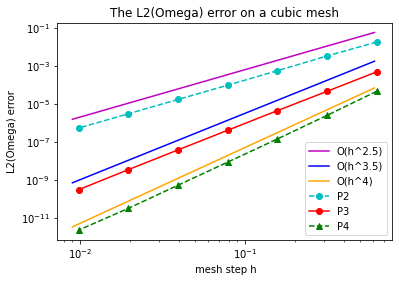}
\end{tabular}
\end{center}
     \caption{Plots of the error in volume norms with respect to the mesh step h corresponding to the convergence order in Table \ref{tab:conv-1-Omega}:
       ${\rm H}^1_0(\Omega)$ norm (above) and ${\rm L}^2(\Omega)$ norm (below) for quadratic meshes (left) and cubic meshes (right).}
     \label{fig:error-2d-omega}
 \end{figure}

The convergence orders presented in Table \ref{tab:conv-1-Omega} and in Figure \ref{fig:error-2d-omega}, relatively to $\L2$ norms on $\Omega$, deserve comments. In the affine case $r=1$, the figures are in perfect agreement with estimates~\eqref{errh1_errl2}: the $\L2$ error norm is in $O(h^{k+1} + h^2)$ and the $\L2$ norm of the gradient of the error is in  
$O(h^{k} + h^{1.5})$.

For quadratic meshes, a super convergence is observed in the geometric error, the case $r=2$ behaves as if  $r=3$: the $\L2$ error norm is in $O(h^{k+1} + h^4)$
and the~$\L2$ norm of the gradient of the error is in  
$O(h^{k} + h^{3.5})$. {This is quite visible in Figure \ref{fig:error-2d-omega} (left) for the $\L2$ error: while using respectively a $\P 3$ and $\P 4$ method, the $\L2$ error graphs in both cases follow the same line representing $O(h^4)$. In the case of the $\L2$ gradient norm of the error, this super convergence is depicted with a $\P3$ (resp. $\P4$) method: the convergence order is equal to 3 (resp. 3.5) surpassing the expected value of $2.5$.}
This super convergence, though not understood, has been documented and further investigated in \cite{D4,Jaca}.
{It has in particular to be noted that the super-convergence does not seem to be restricted neither to the present problem nor to the disk geometry considered here. Further numerical investigations showed that the geometric error relative to quadratic meshes and for integral computations is in $O(h^4)$ for various non-convex domains with no symmetry. In the next section, we will also see that it also holds in dimension 3.}

For the cubic case eventually, the $\L2$ error norm is expected to be in $O(h^{k+1/2} + h^4)$ and the~$\L2$ 
norm of the gradient of the error in 
$O(h^{k-1/2} + h^{3.5})$. This is accurately observed for a $\P1$ (resp.~$\mathbb{P}^4$) method: the $\L2$ error is equal to $2.04$ (resp. $4.04$) and the $\L2$ gradient error is equal to~$1.02$ (resp.~$3.46$). However, a default of order -1/2 is observed on the convergence orders in the $\mathbb{P}^2$ and $\mathbb{P}^3$ case. This default might not be in relation with the finite element approximation since it is not observed when considering $\L2(\Gamma)$ errors as shown in Table \ref{tab:conv-1-Gamma} and as discussed later on. Further experiments showed us that this default is not caused by the specific Ventcel boundary condition, it similarly occurs when considering a Poisson problem with Newman boundary condition on the disk. We also have experienced that this default of convergence is not related to the lift: actually it is related to the finite element interpolation error: so far we have no clues on its explanation.
\begin{table}[!ht]
    \centering
    \begin{tabular}{|l||l|l|l|l||l|l|l|l|}
\cline{2-9}
\multicolumn{1}{c||}{}    &  \multicolumn{4}{|c||}{$\| u-u_h^\ell \|_{\L2 (\Gamma)}$} & \multicolumn{4}{|c|}{$\| \nt  u-  \nt u_h^\ell \|_{\L2 (\Gamma)}$}  \\[0.08cm]
\cline{2-9}
\multicolumn{1}{c||}{}    &  $\P1$ &  $\P2$ &  $\P3$ &  $\P4$ & $\P1$ &  $\P2$ &  $\P3$ &  $\P4$  \tabularnewline
\hline
 Affine mesh (r=1)   & 
\textcolor{black}{2.00} & \textcolor{black}{2.03} & \textcolor{black}{2.01} & \textcolor{black}{2.01} & 
1.00 & \textcolor{black}{2.00} & \textcolor{black}{1.98} & \textcolor{black}{1.98} \tabularnewline
\hline
Quadratic mesh (r=2) & 
2.00 & 3.00 & \textcolor{black}{4.00} & \textcolor{black}{4.02} & 1.00 & 2.00 & 3.00 & \textcolor{black}{4.02} \tabularnewline
\hline
Cubic mesh (r=3) & 
2.00 & \textcolor{black}{3.00} & \textcolor{black}{4.00} & 4.21 & 
1.00 & \textcolor{black}{2.00} & \textcolor{black}{3.00} & 3.98 \tabularnewline
\hline
\end{tabular}
\caption{
  Convergence orders, boundary norms.
  \label{tab:conv-1-Gamma}
}
\end{table}
\begin{figure}[h]
    \begin{center}
    \begin{tabular}{ c c } 
\includegraphics[width=5cm, height=4cm]{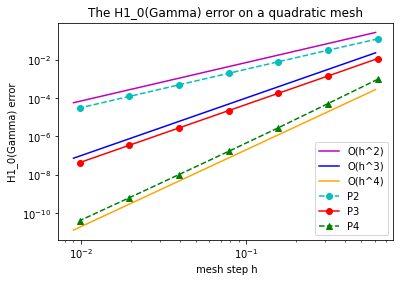} & \includegraphics[width=5cm, height=4cm]{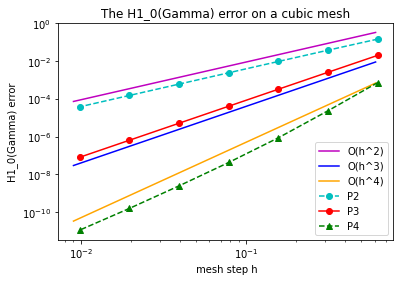} \\ 
 \includegraphics[width=5cm, height=4cm]{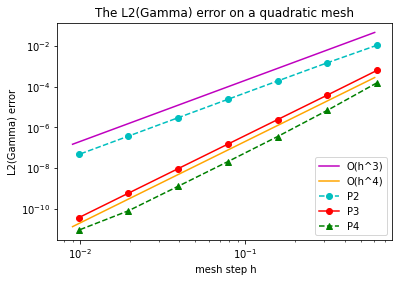} & \includegraphics[width=5cm, height=4cm]{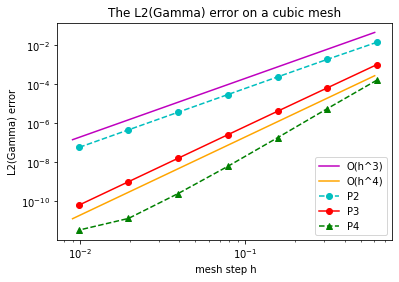} 
\end{tabular}
\end{center}
     \caption{Plots of the error in interior norms with respect to the mesh step h corresponding to the convergence order in Table \ref{tab:conv-1-Gamma}:
       ${\rm H}^1_0(\Gamma)$ norm (above) and ${\rm L}^2(\Gamma)$ norm (below) for quadratic meshes (left) and cubic meshes (right)..}
     \label{fig:error-2d-gamma}
 \end{figure}

\medskip

Let us now discuss Table \ref{tab:conv-1-Gamma} and Figure \ref{fig:error-2d-gamma}, where the surface errors and their convergence rates are observed. The first interesting point is that the $\L2$ convergence towards the gradient of $u$ is faster than expressed in (\ref{errh1_errl2}): $O(h^{k} + h^{r+1})$ instead of~$O(h^{k} + h^{r+1/2})$, as expected. Indeed, this is observed on a cubic and quadratic mesh with a $\P4$ method: the convergence rate is equal to $4$ instead of $3.5$. It seems that the estimate in Theorem \ref{th-error-bound} is not optimal for the tangential gradient norm on~$\Gamma$: so far we have not been able to improve it. Meanwhile the $\L2$ convergence towards~$u$ behaves as expected. Additionally, the super-convergence previously described for quadratic meshes is clearly visible for the boundary norms too. We also notice that the default of convergence of magnitude -1/2 for cubic meshes is absent here.
 \medskip

\paragraph{Lift transformation regularity.} 
{
In Remark \ref{rem:lift-regularity}, we discussed the dependency of the regularity of the lift transformation~$\Ghr:~\omhr \rightarrow \Omega$ defined in \eqref{eq:def-fter} with respect to the exponent $s$ in the term~$(\lambda^\star)^s$. According to the theory, the exponent $s$ in~$(\lambda^\star)^s$ needs to be set to $r+2$ to ensure that $\Ghr$ is piece-wise $C^{r+1}$ on each element. In theory, it is thus necessary to set $s=r+2$ for the estimates in Theorem 
\ref{th-error-bound} to hold. Surprisingly, we have remarked that in practice, 
estimates in Theorem 
\ref{th-error-bound} still hold when decreasing the exponent of $s$ of $(\lambda^\star)^s$.
When setting $s=2$, the results in Table~\ref{tab:conv-1-Omega} and in Table \ref{tab:conv-1-Gamma} remain unchanged.
When setting $s=1$, the same conclusion holds, though in this case $\diff {\Ghr} $ has singularities on the non-internal elements.
This is quite surprising since the estimate in \eqref{ineq:Gh-Id_Jh-1}, which is crucial for the error analysis, no longer holds.
Beyond the convergence rate, we have also
noticed that the accuracy itself is not
damaged when decreasing the exponent $s$ of $(\lambda^\star)^s$. 
A plausible reason for this is that the singular points of the derivatives of~$G_h^{(r)}$ are always located at one element vertex or edge. They are ``\textit{not seen}'', likely because they are away from the quadrature method nodes (used to approximate the integrals) that are located in the interior of considered element. Consequently, the singularities are not detected by this method.}

\begin{table}[!ht]
    \centering
    \begin{tabular}{|l||l|l|l|l||l|l|l|l|}
\cline{2-9}
\multicolumn{1}{c||}{}    &  \multicolumn{4}{|c||}{$\| u-\textcolor{blue}{u_h^{e\ell}} \|_{\L2(\Omega) }$} & \multicolumn{4}{|c|}{$\| \nabla  u-  \nabla  \textcolor{blue}{u_h^{e\ell}} \|_{\L2(\Omega)}$}  \\[0.08cm]
\cline{2-9}
\multicolumn{1}{c||}{}    &  $\P1$ &  $\P2$ &  $\P3$ &  $\P4$ & $\P1$ &  $\P2$ &  $\P3$ &  $\P4$  \tabularnewline
\hline
Quadratic mesh (r=2) & 
2.01 & 2.51 & \textcolor{black}{2.49} & \textcolor{black}{2.49} & 1.00 & 1.52 & 1.49& \textcolor{black}{1.49} \tabularnewline
\hline
Cubic mesh (r=3) & 
2.04 & \textcolor{black}{2.50} & \textcolor{black}{2.48} & 2.49 & 
1.03 & \textcolor{black}{1.51} & \textcolor{black}{1.49} & 1.49 \tabularnewline
\hline
\end{tabular}

\begin{tabular}{c}
  \\
\end{tabular}

\begin{tabular}{|l||l|l|l|l||l|l|l|l|}
\cline{2-9}
\multicolumn{1}{c||}{}    &  \multicolumn{4}{|c||}{$\| u-\textcolor{blue}{u_h^{e\ell}} \|_{\L2 (\Gamma)}$} & \multicolumn{4}{|c|}{$\| \nt u-  \nt \textcolor{blue}{u_h^{e\ell}} \|_{\L2 (\Gamma)}$}  \\[0.08cm]
\cline{2-9}
\multicolumn{1}{c||}{}    &  $\P1$ &  $\P2$ &  $\P3$ &  $\P4$ & $\P1$ &  $\P2$ &  $\P3$ &  $\P4$  \tabularnewline
\hline
Quadratic mesh (r=2) & 
2.00 & 3.00 & \textcolor{black}{2.99} & \textcolor{black}{2.99} & 1.00 & 2.00 & 3.00 & \textcolor{black}{2.98} \tabularnewline
\hline
Cubic mesh (r=3) & 
2.00 & \textcolor{black}{3.00} & \textcolor{black}{2.99} & 2.98 & 
1.00 & \textcolor{black}{2.00} & \textcolor{black}{3.00} & 2.98 \tabularnewline
\hline
\end{tabular}
\caption{
  Convergence orders for the lift in \cite{elliott}.
  \label{tab:conv-2}
}
\end{table}
%
 
\medskip

\paragraph{Former lift definition.} 
As developed in remark \ref{rem:lift-elliott-trace-non-conserve}, another lift transformation $G_h:~\omhr \rightarrow \Omega$  had formerly been introduced in \cite{elliott}, with different properties on the boundary. We reported the convergence orders observed with this lift in Table \ref{tab:conv-2}. \medskip

The first observation is that $\| u- \textcolor{blue}{u_h^{e\ell}} \|_{\L2 (\Omega)}$ is at most in $O(h^{2.5})$ whereas $\| \nabla u-  \nabla \textcolor{blue}{u_h^{e\ell}} \|_{\L2 (\Omega)}$ is at most in $O(h^{1.5})$, resulting in a clear decrease of the convergence rate as compared to tables \ref{tab:conv-1-Omega} and \ref{tab:conv-1-Gamma}. Similarly, $\| u-  \textcolor{blue}{u_h^{e\ell}} \|_{\L2 (\Gamma)}$ and $\| \nabla u-  \nabla \textcolor{blue}{u_h^{e\ell}} \|_{\L2 (\Gamma)}$ are at most in $O(h^3)$ whereas they could reach $O(h^4)$ in tables \ref{tab:conv-1-Omega} and \ref{tab:conv-1-Gamma}. \medskip

Notice that the lift transformation intervenes at two different stages: for the right hand side definition in (\ref{fvh}) and for the error computation itself. We experienced the following. We set the lift for the right hand side computation to the one in \cite{elliott} whereas the lift for the error computation is the one in definition \ref{def:liftvolume}  (so that the numerical solution $u_h$ is the same as in Table \ref{tab:conv-2}, only its post treatment in terms of errors is different). Then we observed that the results are partially improved: for the~$\mathbb{P}^4$ case on cubic meshes, $\| u-  \textcolor{blue}{u_h^{e\ell}} \|_{\L2 (\Omega)} = O(h^{3.0})$ and $\| \nabla u-  \nabla \textcolor{blue}{u_h^{e\ell}} \|_{\L2 (\Omega)} = O(h^{2.5})$, which remain lower than the convergence orders in Table \ref{tab:conv-1-Omega}.\medskip

Still considering the lift definition in \cite{elliott}, we also experienced that the exponent~$s$ in the term~$(\lambda^\star)^s$ in the lift definition (see remark \ref{rem:lift-regularity}) has an influence on the convergence rates. Surprisingly, the best convergence rates are obtained when setting $s=1$: this case corresponds to the minimal regularity on the lift transformation $G_h$, the differential of which (as previously discussed) has singularities on the non-internal mesh elements. In that case however, the convergence rares goes up to $O(h^{3.5})$ and $O(h^{2.5})$ on quadratic and cubic  meshes for $\| u-\textcolor{blue}{u_h^{e\ell}} \|_{\L2(\Omega) }$ and $\| \nabla  u-  \nabla  \textcolor{blue}{u_h^{e\ell}} \|_{\L2(\Omega)}$ respectively. Meanwhile, it has been noticed that setting $s=1$ somehow damages the quality of the numerical solution on the domain boundary: these last results are surprising and with no clear explanation. Eventually, when setting $s\ge 2$, the convergence rates are lower and identical to those in Table \ref{tab:conv-2}.
 
\subsection{A 3D case: error estimates on the unit ball}
The system \eqref{1} is considered on the unit ball $\Omega = {\rm B(O,1)} \subset \R^3$, with source terms $f=-(x+y)\mathrm{e}^{z}$ on the domain and $g=(x+y)\mathrm{e}^{z}(5z+z^2+3)$ on the boundary. The ball is discretized using meshes of order~$r=1,\dots, 3$, which are depicted in Figure~\ref{f3d} for affine and quadratic meshes. 
\begin{figure}[H]
\centering
    \includegraphics[width=0.3\textwidth]{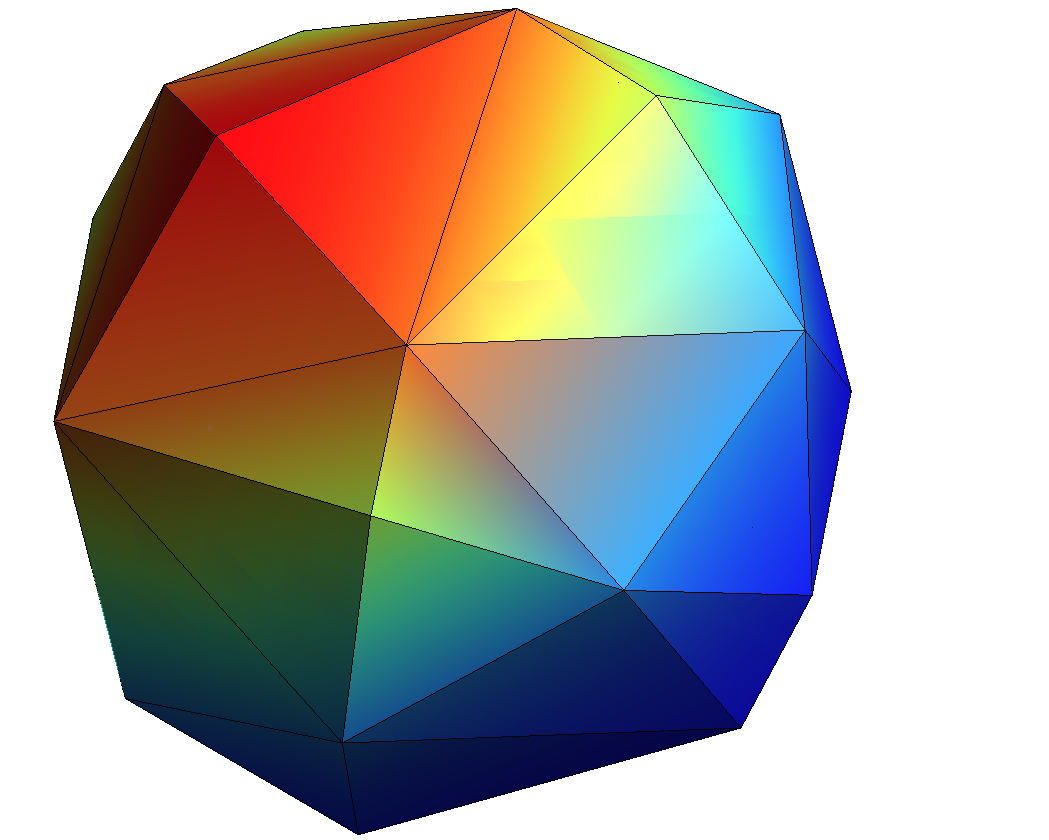}
    ~~~
    \includegraphics[width=0.3\textwidth]{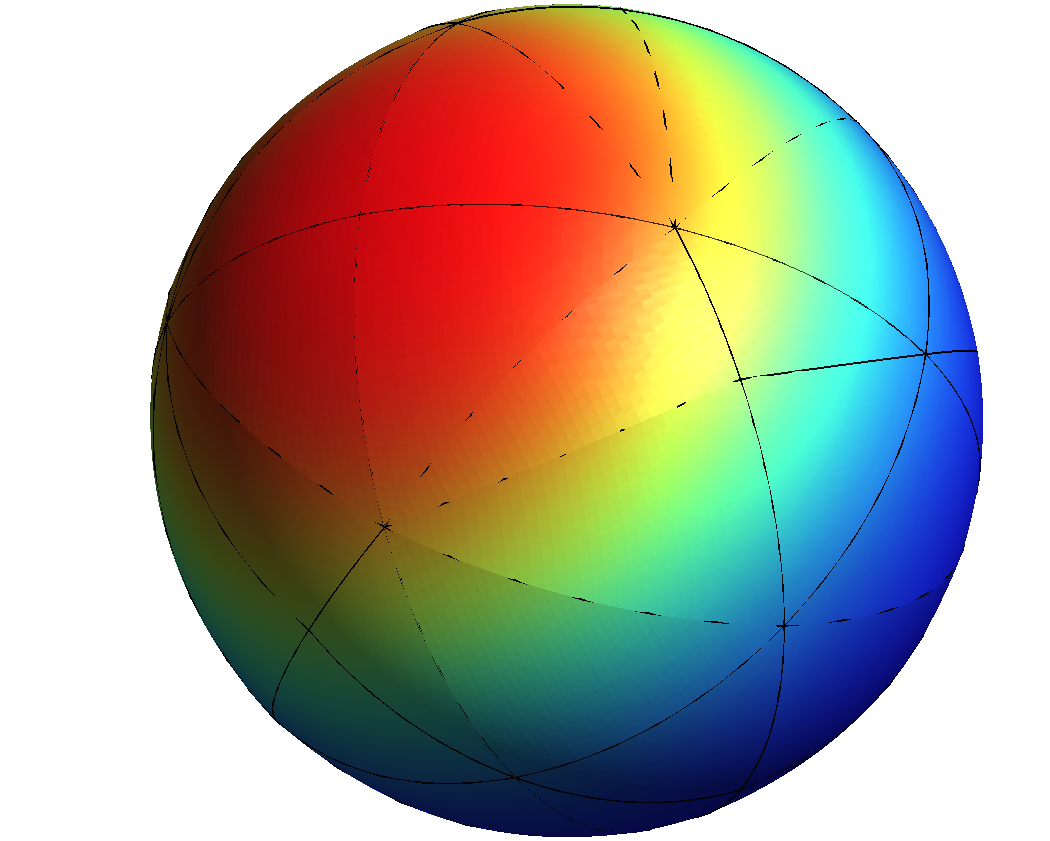}
 \caption{Numerical solution of the Ventcel problem on affine and quadratic meshes.}
 \label{f3d}
\end{figure}

For each mesh order $r$ and finite element degree $k$, we compute the error on a series of six successively refined meshes. Each mesh counts~$10\times 2^{n-1}$ edges on the equator circle, for~$n=1, \dots ,6$. The most refined mesh has approximately $2,4 \times 10^6$ tetrahedra and the associated $\P 3$ finite element method counts  $11 \times 10^6$ degrees of freedom. Consequently the matricial system of the spectral problem, which needs to be solved, has a size~$11 \times 10^6 $ with a rather large stencil. As a result, in the 3D case, the computations are much more demanding. The use of MUMPS, as we did in the 2D case, is no longer an option due to memory limitation.  The inversion of the linear system is done using the conjugate gradient method with a Jacobi pre-conditioner. To handle these computations, we resorted to the UPPA research computer cluster PYRENE\footnote{PYRENE Mesocentre de Calcul Intensif Aquitain, \url{https://git.univ-pau.fr/num-as/pyrene-cluster}.}. Using shared memory parallelism on a single CPU with~$32$~cores and~$2\, 000$~Mb of memory, the total time required is around $2$ hours. \medskip

The following numerical errors are computed on a series of refined meshes, using the lift defined in section \ref{sec:lift-def-surf-vol}: 
\begin{displaymath}
   \| u-u_h^\ell \|_{\L2(\Omega)}, \quad 
   \| \nabla u- \nabla u_h^\ell \|_{\L2(\Omega) },\quad 
   \| u-u_h^\ell \|_{\L2(\Gamma)} \quad {\rm and}\quad 
   \| \nt u- \nt u_h^\ell \|_{\L2(\Gamma) }.
\end{displaymath}

\begin{figure}[h]
\begin{center}
\begin{tabular}{ c c } 
\includegraphics[width=5cm, height=4cm]{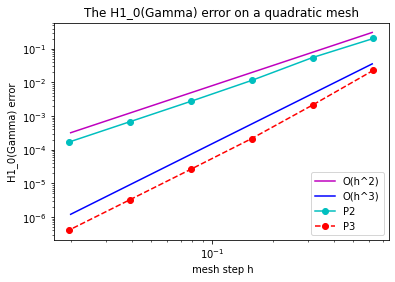} & \includegraphics[width=5cm, height=4cm]{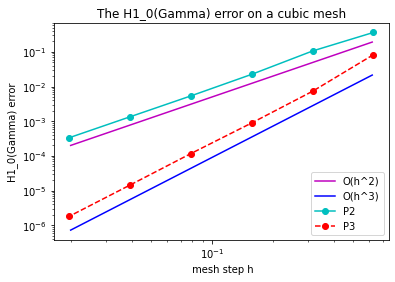} \\ 
\includegraphics[width=5cm, height=4cm]{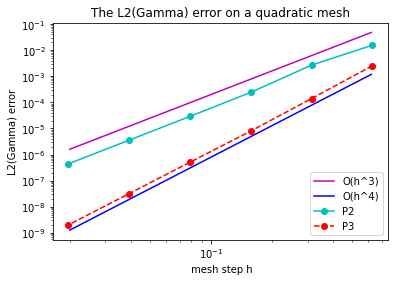} & \includegraphics[width=5cm, height=4cm]{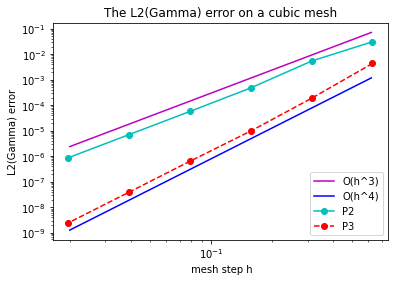} 
\end{tabular}
\end{center}
     \caption{3D case: plots of the error in $\HH1_0(\Gamma)$ norm (above) and $\L2(\Gamma)$ norm (below) 
and for quadratic meshes (left) and cubic meshes (right).
}
     \label{fig:error-3d-surface}
 \end{figure}

{In figure \ref{fig:error-3d-surface}, is displayed a log–log graph of each of the surface errors in $\HH1_0$ and~$\L2$ norms on quadratic and cubic meshes using $\P2$ and $\P3$ finite element methods. 
As a general comment: it can be seen that the quadratic meshes also exhibit a super-convergence as in dimension 2 and always behave as if $r=3$ instead of the expected~$r=2$.

As observed in the case of the disk, the $\L2$ surface errors behave quite well following the inequalities in  \eqref{errh1_errl2}. The $\HH1$ surface errors follow the same pattern as in the previous case: the error is in $O(h^{k} + h^{r+1})$ instead of $O(h^{k} + h^{r+1/2})$.}
 
 \begin{figure}[h]
\begin{center}
\begin{tabular}{ c c } 
\includegraphics[width=5cm, height=4cm]{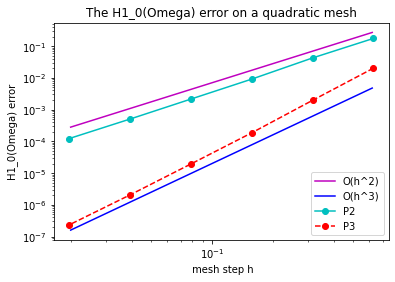} & \includegraphics[width=5cm, height=4cm]{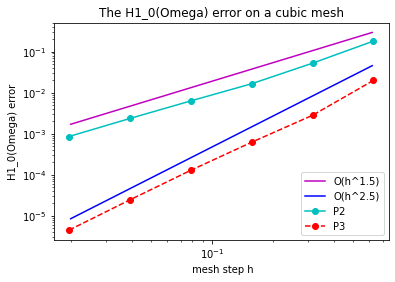} 
\end{tabular}
\end{center}
     \caption{3D case: plots of the error in $\HH1_0(\Omega)$ norm  for quadratic meshes (left) and cubic meshes (right).
}
     \label{fig:error-3d-H10}
 \end{figure}
{In Figure \ref{fig:error-3d-H10}, the $\HH1_0$ error in the volume is computed on quadratic meshes (left) and cubic meshes (right) with a $\P2$ and $\P3$ methods. In the quadratic case, the error has a convergence order of 2 (resp. 3) for a $\P2$ (resp. $\P 3$) method, following the inequality \eqref{errh1_errl2}. In the cubic case, the  same phenomena is observed as in the case of the disk: a loss of $-1/2$ in the convergence rate is detected, and the error is in $O(h^{1.5})$ (resp. $O(h^{2.5})$) for a $\P2$ (resp. $\P 3$) method. }
 
\begin{figure}[h]
\begin{center}
\begin{tabular}{ c c } \includegraphics[width=5cm, height=4cm]{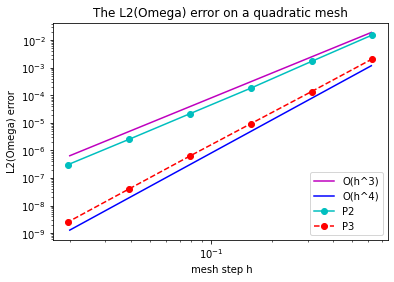} &
\includegraphics[width=5cm, height=4cm]{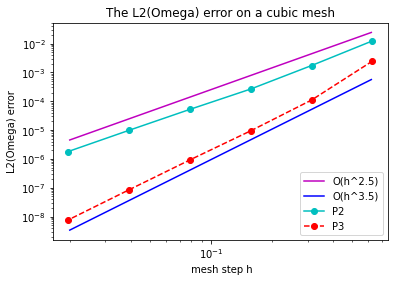} 
\end{tabular}
\end{center}
     \caption{
3D case: plots of the error in $\LL2(\Omega)$ norm  for quadratic meshes (left) and cubic meshes (right).
}
     \label{fig:error-3d-L2}
 \end{figure}
{In Figure \ref{fig:error-3d-L2}, the $\L2$ error in the volume is computed on quadratic meshes (left) and cubic meshes (right) with a $\P2$ and $\P3$ methods. In the quadratic case, the error has a convergence order of 3 (resp. 4) for a $\P2$ (resp. $\P 3$) method. This indicates that the super convergence phenomena is still observed on 3D domains. In the cubic case, the  same default of $-1/2$ in the convergence rate is observed as in the case of the disk: the graph of the error seems to have a slope of $2.5$ (resp. $3.5$) instead of $3$ (resp. $4$) for a $\P2$ (resp. $\P 3$) method. }

\appendix

%
%
%
%
%
\section{Proof of Proposition \ref{prop-Gh}}
\label{appendix:proof-Ghr}
%
%
%
%
%
%
%
%
%
%
%
%
%
Following the notations given in definition~\ref{def:sigma-lambdaetoile-haty}, we present the proof of Proposition \ref{prop-Gh} which requires a series of preliminary results given in  Propositions \ref{prop-y}, \ref{Prop:by-y} and  \ref{prop:rho-tr}. The proofs of these propositions are inspired by the proofs of \cite[Lemma 6.2]{Bernardi1989}, ~\cite[Lemma 4.3]{elliott} and~\cite[proposition 4.4]{elliott} respectively.

\begin{proposition}
\label{prop-y}
The map $y: \hatx \in \trefminissigma \mapsto  y:=\ftr{( \haty)} \in \ghr$ is a smooth function and for all~$m \ge 1$, there exists a constant $c>0$ independent of $h$ such that,
\begin{equation}
\label{ineq:y}
   \|\diff^m y \|_{\Linf(\trefminissigma)} \le \frac{ch}{(\lambdaetoile)^m}.
\end{equation}
\end{proposition}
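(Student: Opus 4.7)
The plan is to exploit the structure $\hat{y} = g/\lambda^\star$ with $g(\hat{x}) := \sum_i \varepsilon_i \lambda_i \hat{v}_i$, noting that $g$ and $\lambda^\star$ are both affine functions of $\hat{x}$. This gives smoothness of $\hat{y}$ (hence of $y = F_T^{(r)}(\hat{y})$) on $\hat{T}\setminus\hat{\sigma}$ for free, since $\lambda^\star$ vanishes exactly on $\hat{\sigma}$ and $F_T^{(r)}$ is polynomial. The argument for the bound then splits into two clean steps: first control $D^m \hat{y}$, then use the chain rule to pull $F_T^{(r)}$ in and produce the factor $h$.

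For the first step, I would differentiate the identity $\lambda^\star \hat{y} = g$ using Leibniz's rule. Since $\lambda^\star$ and $g$ are affine, $D^\beta \lambda^\star = 0$ and $D^\beta g = 0$ for $|\beta|\ge 2$, so only a handful of terms survive. For a multi-index $\alpha$ with $|\alpha|=m\ge 2$ this reduces to
\begin{equation*}
\lambda^\star D^\alpha \hat{y} = -\sum_{|\beta|=1,\,\beta\le\alpha} \binom{\alpha}{\beta}\, D^\beta \lambda^\star\, D^{\alpha-\beta}\hat{y},
\end{equation*}
with the case $m=1$ treated separately (with $D^\alpha g$ as an additional bounded term). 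Since $\hat{y}\in\hat{T}$ is uniformly bounded and the affine coefficients $D^\beta\lambda^\star$, $D^\beta g$ are constants depending only on the reference simplex, an induction on $m$ yields the purely geometric bound $|D^m \hat{y}(\hat{x})|\le c/(\lambda^\star(\hat{x}))^m$, with $c$ independent of $h$.

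For the second step, I would apply Faà di Bruno's formula to $y(\hat{x}) = F_T^{(r)}(\hat{y}(\hat{x}))$:
\begin{equation*}
D^m y(\hat{x}) = \sum_{\pi}\bigl(D^{|\pi|} F_T^{(r)}\bigr)(\hat{y}(\hat{x}))\, \prod_{B\in\pi} D^{|B|}\hat{y}(\hat{x}),
\end{equation*}
summed over partitions $\pi$ of $\{1,\dots,m\}$. Combining with the inductive estimate above gives
\begin{equation*}
|D^m y(\hat{x})|\le \sum_\pi \|D^{|\pi|} F_T^{(r)}\|_{\Linf(\tref)}\,\frac{c^{|\pi|}}{(\lambda^\star(\hat{x}))^m}.
\end{equation*}
It remains to invoke the standard estimate $\|D^j F_T^{(r)}\|_{\Linf(\tref)} \le c h$ for every $1\le j\le m$, valid for curved Lagrange transformations of order $r$. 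Factoring out $h/(\lambda^\star)^m$ and absorbing the (finite, $m$-dependent but $h$-independent) number of partitions into $c$ concludes.

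The main obstacle is the uniform-in-$h$ derivative bound $\|D^j F_T^{(r)}\|_{\Linf(\tref)}\le ch$: it is the only place where the geometrical scaling of the curved element enters, and it relies on writing $F_T^{(r)} = F_T + \eta_T$ with $F_T$ affine (so $\|DF_T\|\le ch$ and $D^j F_T = 0$ for $j\ge 2$) and controlling the correction $\eta_T = F_T^{(r)} - F_T^{(e)} + (F_T^{(e)} - F_T)$ via the $\mathbb{P}^r$ interpolation estimates on $\hat{T}$ together with equivalence of norms on the finite-dimensional space $\mathbb{P}^r(\hat{T})$. Once this is granted, the rest of the argument is essentially algebraic: Leibniz on an affine identity and a single application of Faà di Bruno.
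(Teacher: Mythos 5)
Your proposal is correct and follows essentially the same route as the paper: bound $\|\diff^m \haty\|$ by $c/(\lambdaetoile)^m$ via an induction (your Leibniz argument on the affine identity $\lambdaetoile\,\haty=g$ is just a more explicit version of the paper's step), then apply Faà di Bruno to $y=\ftr\circ\haty$ together with $h$-scaled bounds on the derivatives of $\ftr$. The only cosmetic difference is that you invoke a uniform bound $\|\diff^j\ftr\|_{\Linf}\le ch$ for all $j\ge 1$ (trivially true for $j>r$ since $\ftr\in\P r$), whereas the paper splits the sum and uses $\|\diff\ftr\|\le ch$ and $\|\diff^p\ftr\|\le ch^{r}$ for $p\ge 2$ from the cited literature; both yield the stated estimate.
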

\begin{remark}
\label{rem:Faa-di-bruno}
    The proof of this proposition and of the next one rely on the formula of Faà di Bruno (see \cite[equation 2.9]{Bernardi1989}). This formula states that for two functions $f$ and $g$, which are of class~$\c m$, such that $f \circ g$ is well defined, then, 
\begin{equation}
\label{Faa_di_bruno_eqt}
     \diff^m(f\circ g) = \sum_{p=1}^m \Big(  \diff^p(f) \sum_{i\in E(m,p)} c_i \prod_{q=1}^m   \diff^q  g ^{i_q} \Big), 
\end{equation}
where $E(m,p) := \{ i \in \N^{m};  \sum_{q=1}^m i_q=p$ and $\sum_{q=1}^m q i_q = m \}$ and $c_i$ are positives constants, for all~$i \in E(m,p)$.
\end{remark}
%
%
%
%
%
%
\begin{proof}[Proof of Proposition \ref{prop-y}]
We detail the proof in the $2$ dimensional case, the~3D case can be proved in a similar way. 

Consider, the reference triangle~$\tref$ with the usual orientation. Its vertices are denoted $(\hatv_i)_{i=1}^3$ and the associated barycentric coordinates respectively are: $\lambda_1 = 1-x_1-x_2$, $\lambda_2 = x_2$ and $\lambda_3 = x_1$. Consider a non-internal mesh element $\tr$ such that, without loss of generality, $v_1 \notin \Gamma$. In such a case, depicted in figure \ref{E-Er}, $\varepsilon_1=0$ and  $\varepsilon_2=\varepsilon_3=1$, since~$v_2, v_3~\in~\Gamma~\cap~\tr$. This implies that~$\lambdaetoile=\lambda_2+\lambda_3=x_2+x_1$ and,
\begin{equation}
 \label{eq:haty}   
    \haty= \frac{1}{\lambdaetoile} (\lambda_2 \hatv_2+\lambda_3 \hatv_3) = \frac{1}{x_2+x_1} (x_2 \hatv_2+x_1 \hatv_3).
\end{equation}
In this case, $\hatsigma= \{ \hatv_1\}$ and $\haty$ is defined on $\tref \setminus \{ \hatv_1\}$. 

%
%
%
%
%
%
\begin{figure}[h]
\centering
\begin{tikzpicture}

\draw (0.5,0.5) node[above] {$\hat{T}$};

\draw (0,0) node  {$\bullet$};
\draw (2,0) node  {$\bullet$};
\draw (0,2) node  {$\bullet$};

\draw (0,0) node[below] {$\hatv_1$};
\draw (2,0) node[below]  {$\hatv_2$};
\draw (0,2) node[above]  {$\hatv_3$};

\draw (0,0) -- (2,0) ;
\draw (0,0) -- (0,2);
\draw[blue] (2,0) -- (0,2) ;


\draw (1,0) node  {$\bullet$};
\draw (1,1) node  {$\bullet$};
\draw (0,1) node  {$\bullet$};





\draw[thick, red] [->] (2.8,1) -- (4.3,1);
\draw[red] (3.5,1) node[above] {$\textcolor{red}{\ftr}$};

\draw[blue] (1.3,0.7) node  {$\bullet$};
\draw[blue] (0.7,0.3) node  {$\bullet$};

\draw[blue] (1.3,0.7) node[below] {$ \haty$};
\draw[blue] (0.7,0.3) node[left] {$ \hatx$};

\draw[blue] (1.3,0.7) -- (0.7,0.3) ;





\draw[red] (7.4,0.8) node[above] {$\tr$};


\draw (6,0) node  {$\bullet$};
\draw (7.36,2.5) node  {$\bullet$};
\draw (9,0) node  {$\bullet$}; 
\draw (7.36,2.56) -- (9,0);
\draw (9,0) -- (6,0);


\draw (6,0)  node[left] {$v_2$};
\draw (7.36,2.6) node[above]  {$v_3$};
\draw (9,0) node[below]  {$v_1$};


\draw (7.5,0) node  {$\bullet$};
\draw (6.4,1.5) node  {$\bullet$}; 
\draw (8.15,1.3) node  {$\bullet$};


\draw[red] (6,0) arc (180:122:3);

\draw plot [domain=-0.2:1.5] (\x+6,-1.786*\x^2+4.3*\x);

\draw (5,-0.6) node {$\Gamma$};
\draw[thick]   [->] (5.2,-0.6)--(5.88,-0.5);





\draw[red] (6.07,0.7) node  {$\bullet$};
\draw[red] (6.7,0.3) node  {$\bullet$};

\draw[red] (6.07,0.7) node[left] {${y}$};
\draw[red] (6.7,0.3) node[right] {${x}$};

\draw[red] (6.7,0.3) -- (6.07,0.7) ;


\draw[thick,blue]   [->] (1.6,1.6)--(0.8,1.2);
\draw[blue] (1.8,1.7) node {$\hat{e}$};

\draw[red] (5.5,2.85) node {$e^{(r)}=\ftr(\hat{e})$};
\draw[thick,red]   [->] (5.9,2.6)--(6.8,2);

\end{tikzpicture}
\caption{Displaying $\ftr:\tref \to \tr$ in a 2D quadratic case (r=2).}
\label{E-Er}
\end{figure}
%
%
%
%
%

By differentiating the expression \eqref{eq:haty} of $\haty$ and using an induction argument, it can be proven that 
there exists a constant $c>0$, independent of $h$, such that, 
\begin{equation}
\label{ineq:haty}
    \|\diff^m  \haty \|_{\Linf(\trefminissigma)} \le \frac{c}{(\lambdaetoile)^m}, \ \ \ \ \ \mbox{ for all } m\ge 1. 
\end{equation}

Since $\ftr$ is the $\P r$-Lagrangian interpolant of $\fte$ on $\tref$, then 
$y=\ftr \circ \haty$ is a smooth function on $\trefminissigma$. 
%
%
%
%
%
%
%
%
%
%
We now apply the inequality \eqref{Faa_di_bruno_eqt} for $y = \ftr \circ \haty$ to estimate its derivative's norm as follows, for all $m\ge 1$,
\begin{equation*}
\label{bernardi_with_y}
    \| \diff^m(y)\|_{\Linf(\trefminissigma)} \le \sum_{p=1}^m \Big( \| \diff^p(\ftr)\|_{\Linf(\hat{e})} \sum_{i\in E(m,p)} c_i \prod_{q=1}^m  \| \diff^q  \haty\|_{\Linf(\trefminissigma)}^{i_q} \Big),
\end{equation*}
where $\hat{e}:= (\ftr)^{(-1)}(e\r)$ and $e\r:=\partial \tr \cap \ghr$ are displayed in Figure \ref{E-Er}. 
%
%
%
%
%
%
Afterwards, we decompose the sum into two parts, one part taking $p=1$ and the second one for $p \ge 2$, and apply inequality~\eqref{ineq:haty}, 
\begin{multline*}
\| \diff^m(y)\|_{\Linf(\trefminissigma)} \\
\begin{array}{l}
 \le  \displaystyle \| \diff(\ftr)\|_{\Linf(\hat{e})} \!\!\!\!\!\sum_{i\in E(m,1)} \prod_{q=1}^m  (\frac{c}{(\lambdaetoile)^q})^{i_q} \!+\!  \sum_{p=2}^m \Big( \| \diff^p(\ftr)\|_{\Linf(\hat{e})} \!\!\!\!\!\sum_{i\in E(m,p)} \prod_{q=1}^m  (\frac{c}{(\lambdaetoile)^q})^{i_q}\Big) \!\!\!\!\!\!\!\\
     \le  \displaystyle c h  {\lambdaetoile}^{(-\sum_{q=1}^m qi_q)}+ c \sum_{p=2}^m h^r {\lambdaetoile}^{(-\sum_{q=1}^m qi_q)}
     \le  c h  (\lambdaetoile)^{-m},
\end{array}
\end{multline*}
using that $\| \diff(\ftr)\|_{\Linf(\hat{e})} \le ch$ and $\| \diff^p(\ftr)\|_{\Linf(\hat{e})} \le ch^r$, for $2 \le p \le r+1$ (see \cite[page 239]{ciaravtransf}), 
where the constant $c>0$ is independent of $h$. This concludes the proof. 
%
%
%
%
%
%
\end{proof}

\begin{proposition}
\label{Prop:by-y}
Assume that $\Gamma$ is $\c {r+2}$ regular. Then the mapping $b\circ y: \hatx \in \trefminissigma \mapsto b(y(\hatx)) \in \Gamma$ is of class $\c{r+1}$. Additionally, for any $1\le m\le r+1$, there exists a constant $c>0$ independent of~$h$ such that, 
\begin{equation}
\label{ineq:by-y}
    \|\diff^m(b(y)- y) \|_{\Linf(\trefminissigma)} \le \frac{ch^{r+1}}{(\lambdaetoile)^m}.
\end{equation}
\end{proposition}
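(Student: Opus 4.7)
My plan is to express $b(y) - y$ as a sum of two terms each of which can be controlled via Faà di Bruno's formula combined with Lagrange interpolation estimates. The key observation is that $\fte(\haty) \in \Gamma$ and is therefore fixed by the orthogonal projection $b$: indeed, at $\haty$ one has $\lambdaetoile = 1$, so Remark \ref{rem:Fe_T} gives $\fte(\haty) = b(\ft(\haty)) \in \Gamma$, hence $b(\fte(\haty)) = \fte(\haty)$. Using this identity together with $y = \ftr(\haty)$, I obtain the decomposition
\begin{equation*}
b(y) - y \;=\; \bigl[b(\ftr(\haty)) - b(\fte(\haty))\bigr] \;-\; \bigl[\ftr(\haty) - \fte(\haty)\bigr],
\end{equation*}
which is advantageous because both bracketed differences involve the Lagrange interpolation error $\ftr - \fte$, carrying the $h^{r+1}$ smallness we need. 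For the regularity claim, since $\Gamma$ is $\c{r+2}$ the signed distance $\d$ is $\c{r+2}$ on the tubular neighborhood from Proposition \ref{tub_neigh_orth_proj_prop}, so $b(x) = x - \d(x)\nabla\d(x)$ is $\c{r+1}$; combined with the $\c{r+1}$ regularity of $\ftr, \fte$ on $\tref$ and the smoothness of $\haty$ on $\trefminissigma$, this yields $b\circ y - y \in \c{r+1}(\trefminissigma)$.

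For the second bracket $(\fte - \ftr)\circ\haty$, I apply Faà di Bruno's formula \eqref{Faa_di_bruno_eqt}. Standard Lagrange interpolation estimates on the fixed reference simplex, combined with the $h$-dependent bounds on the derivatives of $\fte$ proved in \cite{ciaravtransf,elliott}, give $\|\diff^p(\fte - \ftr)\|_{\Linf(\tref)} \le c h^{r+1}$ for $0 \le p \le r+1$. Inserting this together with the bound $\|\diff^q \haty\|_{\Linf(\trefminissigma)} \le c(\lambdaetoile)^{-q}$ obtained in the proof of Proposition \ref{prop-y}, and using the defining property $\sum_q q i_q = m$ of the index set $E(m,p)$, one gets $\|\diff^m((\fte - \ftr)\circ\haty)\|_{\Linf(\trefminissigma)} \le c h^{r+1}(\lambdaetoile)^{-m}$, exactly as in the last display of the proof of Proposition \ref{prop-y}.

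For the first bracket, I use the integral representation
\begin{equation*}
b(\ftr(\haty)) - b(\fte(\haty)) = \int_0^1 \diff b\bigl(t\,\ftr(\haty) + (1-t)\,\fte(\haty)\bigr)\cdot (\ftr - \fte)(\haty)\,\d t,
\end{equation*}
differentiate $m$ times in $\hatx$ under the integral sign using Leibniz, and estimate each resulting term by Faà di Bruno. The crucial point is that every such term contains a factor $\diff^{p'}((\ftr - \fte)\circ\haty)$ for some $0 \le p' \le m$, which provides the $h^{r+1}$ smallness by the argument of the previous paragraph; the derivatives of $\diff b$ are bounded uniformly thanks to the $\c{r+1}$ regularity of $b$; and the derivatives of the arguments $\ftr\circ\haty$ and $\fte\circ\haty$ contribute at most a $(\lambdaetoile)^{-j}$ factor for some $j$, with the total exponent on $(\lambdaetoile)^{-1}$ controlled by $m$ via the constraints in $E(m,p)$. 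Collecting all contributions gives the desired bound \eqref{ineq:by-y}. The main obstacle is the bookkeeping of the Faà di Bruno expansion for the composition with $b$: one must verify that, however the $m$ derivatives distribute among $\diff b$, its argument, and the interpolation error $(\ftr - \fte)\circ\haty$, the cumulative exponent on $(\lambdaetoile)^{-1}$ never exceeds $m$ and that exactly one factor of $h^{r+1}$ survives. This is where the hypothesis $\Gamma \in \c{r+2}$ is essential, since a lesser regularity would leave $b$ without enough derivatives for the full range $1 \le m \le r+1$.
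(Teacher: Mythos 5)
Your decomposition $b(y)-y=\bigl[b(\ftr(\haty))-b(\fte(\haty))\bigr]-\bigl[\ftr(\haty)-\fte(\haty)\bigr]$ is a genuinely different route from the paper's: the paper applies Faà di Bruno once to $(b-\mathrm{id})\circ y$ and gets the smallness from the observation that $\mathrm{id}$ restricted to the curved edge $e^{(r)}=\partial\tr\cap\ghr$ is the $\P r$-Lagrange interpolant of $b$ there (because $b$ fixes the Lagrange nodes, which lie on $\Gamma$), yielding $\|\diff^p(b-\mathrm{id})\|_{\Linf(e^{(r)})}\le ch^{r+1-p}$, whereas you place the smallness in the reference-element interpolation error $\fte-\ftr$. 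Your treatment of the second bracket is correct, granting the cited bound $\|\diff^p(\fte-\ftr)\|_{\Linf(\tref)}\le ch^{r+1}$, $0\le p\le r+1$ (which indeed follows from the affine-based estimates in \cite{elliott,ciaravtransf}, so no circularity with the statement being proved).

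There is, however, a genuine gap in the first bracket. Differentiating your integral representation $m$ times produces, through Leibniz, the term in which all $m$ derivatives fall on $(\diff b)\circ\Phi_t$, with $\Phi_t:=t\,\ftr(\haty)+(1-t)\,\fte(\haty)$; by Faà di Bruno this term involves $\diff^{p+1}b$ for $p$ up to $m$. At the top order $m=r+1$ you therefore need $\diff^{r+2}b$, while the hypothesis $\Gamma\in\c {r+2}$ only gives $b=\mathrm{id}-\d\,\na\d\in\c {r+1}$ on the tubular neighborhood --- exactly one derivative short, so your claim that ``the derivatives of $\diff b$ are bounded uniformly thanks to the $\c {r+1}$ regularity of $b$'' fails precisely for that term. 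The defect is repairable without strengthening the hypothesis: drop the integral representation, difference the Faà di Bruno expansions of $b\circ\Phi_1$ and $b\circ\Phi_0$ term by term, apply the mean value theorem only to the factors $\diff^p b$ with $p\le r$ (so at most $\diff^{r+1}b$ is ever used), and treat the extreme term $p=m=r+1$ by mere boundedness of $\diff^{r+1}b$, since there the product factor is bounded by $c\,h^{m}(\lambdaetoile)^{-m}=c\,h^{r+1}(\lambdaetoile)^{-m}$ and already supplies the required power of $h$ (one should also note, in passing, that the segment joining $\ftr(\haty)$ and $\fte(\haty)$ lies in $\mathcal{U}_\Gamma$ for $h$ small). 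Alternatively, the paper's single-composition argument avoids the issue entirely, as it never differentiates $b$ more than $r+1$ times.
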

%
%
%
%
%
%
\begin{proof}
Since $\Gamma$ is $\c {r+2}$ regular, the orthogonal projection $b$ is a $\c{r+1}$ function on a tubular neighborhood of $\Gamma$ (see \cite[Lemma~4.1]{actanum} or \cite{demlow2019}). Consequently, following Proposition \ref{prop-y},~$b(y)-y$ is of class $\c {r+1}$ on $\trefminissigma$. 

Secondly, consider $1\le m \le r+1$. Applying the Faà di Bruno formula \eqref{Faa_di_bruno_eqt} for the function~$b(y)-y=(b-id) \circ y$, we have,
\begin{equation}
\label{eq:Faa_di_bruno_by-y}
    \| \diff^m(b(y)-y)\|_{\Linf(\trefminissigma)} \le \sum_{p=1}^m \Big( \| \diff^p(b-id)\|_{\Linf(e^{(r)})} \sum_{i\in E(m,p)} c_i \prod_{q=1}^m  \| \diff^q y\|_{\Linf(\trefminissigma)}^{i_q} \Big),
\end{equation}
where $e^{(r)}= \partial \tr \cap \ghr$ is displayed in Figure \ref{E-Er}. 
%
%
Notice that 
$b(v)= v$ for any $\P r$-Lagrangian interpolation nodes $v\in \Gamma \cap e^{(r)}$. 
Then $id_{|_{e\r}}$ is the $\P r$-Lagrangian interpolant of $b_{|_{e\r}}$. Consequently, the interpolation inequality can be applied as follows (see \cite{EG,Bernardi1989}),
$$
  \forall z \in e\r,  \quad \|\diff^p (b(z)-z) \|
    \le ch^{r+1-p} , \ \ \ \ \ \mbox{ for any } 0 \le p \le r+1.
$$
This interpolation result combined with \eqref{ineq:y} is replaced in \eqref{eq:Faa_di_bruno_by-y} to obtain,
\begin{multline*}
    \| \diff_{ \hatx}^m(b(y)-y)\|_{\Linf(\trefminissigma)} \le c \sum_{p=1}^m \Big( h^{r+1-p} \sum_{i\in E(m,p)} \prod_{q=1}^m  (\frac{h}{(\lambdaetoile)^q})^{i_q} \Big)\\
     \le c \sum_{p=1}^m \Big( h^{r+1-p} \frac{h^{\sum_{q=1}^m i_q}}{(\lambdaetoile)^{\sum_{q=1}^m qi_q}} \Big)
     \le c \sum_{p=1}^m \Big( h^{r+1-p} \frac{h^p}{(\lambdaetoile)^m} \Big)
     \le  c \frac{h^{r+1}}{(\lambdaetoile)^m},
\end{multline*}
where the constant $c>0$ is independent of $h$. 
This concludes the proof.
\end{proof}
Now, we introduce the mapping $\rhotr$, such that $\ftre=\ftr+\rhotr$ transforms $\tref$ into the exact triangle $\te$.
%
%
%
%
%
\begin{proposition}
\label{prop:rho-tr}
%
%
Let  $\rhotr : \hatx \in \tref \mapsto \rhotr(\hatx) \in \R^d$, be given by, $$\rhotr(\hatx):= \left\{  
\begin{array}{ll}
 0 & {\rm if } \, \hatx \in \hatsigma, \\
(\lambdaetoile)^{r+2} (b(y) - y) &  {\rm if } \, \hatx \in \trefminissigma.
\end{array}\right.$$  The mapping $\rhotr$ is of class $\c {r+1}$ on $\tref$ and there exist a constant $c>0$ independent of $h$ 
such that,
\begin{equation}
\label{ineq:rho_tr}
   \| \diff^m \rhotr \|_{\Linf(\tref)} \le ch^{r+1}, \ \ \ \mbox{ for } \ \  0 \le m \le r+1.
\end{equation}
\end{proposition}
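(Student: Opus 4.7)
The plan is to apply Leibniz's rule to the product $\rhotr = (\lambdaetoile)^{r+2}(b(y)-y)$ on $\trefminissigma$, use the bounds from Proposition \ref{Prop:by-y} on the derivatives of $b(y)-y$, and exploit the fact that $\lambdaetoile$ is an affine function of $\hatx$ to control the derivatives of the prefactor $(\lambdaetoile)^{r+2}$. Finally, I would extend the resulting estimate from $\trefminissigma$ to all of $\tref$ by showing that $\rhotr$ together with its first $r+1$ derivatives extends continuously by $0$ across $\hatsigma$.

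More precisely, on $\trefminissigma$ I would write, for $0 \le m \le r+1$,
\begin{equation*}
\diff^m \rhotr \;=\; \sum_{p=0}^{m} \binom{m}{p}\, \diff^{p}\!\bigl((\lambdaetoile)^{r+2}\bigr)\, \diff^{m-p}\bigl(b(y)-y\bigr).
\end{equation*}
Since $\lambdaetoile$ is a linear combination of the barycentric coordinates on $\tref$, a direct induction shows $\|\diff^{p}((\lambdaetoile)^{r+2})\|_{\Linf(\tref)} \le c\,(\lambdaetoile)^{r+2-p}$ for $0\le p\le r+1$. Combined with Proposition \ref{Prop:by-y}, which gives $\|\diff^{m-p}(b(y)-y)\|_{\Linf(\trefminissigma)} \le c h^{r+1}/(\lambdaetoile)^{m-p}$ when $m-p\ge 1$, and with the direct bound $\|b(y)-y\|_{\Linf(\trefminissigma)} \le c h^{r+1}$ (since $y \in e^{(r)} \subset \ghr$ and $\ghr$ lies within a tubular neighborhood of $\Gamma$ of width $O(h^{r+1})$) used for the term $p=m$, every summand is controlled by
\begin{equation*}
c\,(\lambdaetoile)^{r+2-p}\cdot \frac{h^{r+1}}{(\lambdaetoile)^{m-p}} \;=\; c\, h^{r+1}\,(\lambdaetoile)^{r+2-m}.
\end{equation*}
Because $m\le r+1$, the exponent $r+2-m\ge 1$ is nonnegative and $\lambdaetoile\le 1$ on $\tref$, which gives $\|\diff^m \rhotr\|_{\Linf(\trefminissigma)} \le c h^{r+1}$.

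It remains to extend this estimate across $\hatsigma$. For any $\hatx_0 \in \hatsigma$ one has $\lambdaetoile(\hatx_0)=0$, so $(\lambdaetoile)^{r+2-m}\to 0$ as $\hatx \to \hatx_0$ in $\trefminissigma$ whenever $m\le r+1$; combined with the pointwise factorization above, this shows that each partial derivative of $\rhotr$ of order $\le r+1$ admits a continuous extension by $0$ on $\hatsigma$. Standard arguments (integrating along segments from $\hatsigma$ into $\trefminissigma$) then identify these continuous extensions with the actual classical derivatives of the function $\rhotr$ defined as $0$ on $\hatsigma$, yielding $\rhotr \in \c{r+1}(\tref)$ and the uniform bound on all of $\tref$.

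The main obstacle is the singular behavior at $\hatsigma$: both $(\lambdaetoile)^{r+2}$ and $b(y)-y$ (whose derivatives blow up like $(\lambdaetoile)^{-m}$ by Proposition \ref{Prop:by-y}) must be balanced precisely, and the choice of the exponent $r+2$ is exactly what makes the positive power $r+2-m$ dominate for all $m \le r+1$. The careful bookkeeping of these two competing factors, together with rigorously justifying that the continuous extension by $0$ across $\hatsigma$ really produces a $\c{r+1}$ function (and not merely one whose derivatives are continuous away from $\hatsigma$ and tend to $0$ at $\hatsigma$), is the delicate part of the argument.
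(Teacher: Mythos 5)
Your proposal is correct and follows essentially the same route as the paper: Leibniz's rule on $\trefminissigma$, the balance between the decaying factor $(\lambdaetoile)^{r+2-p}$ and the blow-up $(\lambdaetoile)^{-(m-p)}$ from Proposition \ref{Prop:by-y}, and the continuous extension of all derivatives by $0$ across $\hatsigma$ thanks to the positive exponent $r+2-m$. Your extra care in justifying that the extended derivatives are genuine classical derivatives (and the direct $O(h^{r+1})$ bound for the undifferentiated factor $b(y)-y$) only makes explicit what the paper leaves implicit.
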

\begin{proof}
The mapping $\rhotr$ is of class $\c{r+1}(\trefminissigma)$, being the product of equally regular functions. Consider $ 0 \le m \le r+1$.
Applying the Leibniz formula, we have,
\begin{align*}
  \diff^m{\rhotr}_{|_{\trefminissigma}} & = \diff^m ((\lambdaetoile)^{r+2} (b(y) - y) ) \\
  & = \sum_{i=0}^m \big(_{\ i}^m \big) (r+2)....(r+3-i) (\lambdaetoile)^{r+2-i}\diff^{m-i}(b(y)-y).
\end{align*}
Then applying \eqref{ineq:by-y}, we get, for $\hatx \in \trefminissigma,$
$$
   \| \diff^m \rhotr(\hatx) \| \le c  \sum_{i=0}^m  (\lambdaetoile)^{r+2-i} \frac{ch^{r+1}}{(\lambdaetoile)^{m-i}} \le c h^{r+1}  (\lambdaetoile)^{r+2-m} .
$$
Since $r+2-m >0$, $(\lambdaetoile)^{r+2-m}~\underset{\hatx \to \hatsigma}\longrightarrow~0$. Consequently, $\diff^m \rhotr$ can be continuously extended by~$0$ on $\hatsigma$ when $0\le m \le r+1$. Thus $\rhotr \in \c {r+1}$ and the latter inequality ensures \eqref{ineq:rho_tr}.
\end{proof}

We can now prove Proposition~\ref{prop-Gh},
as mentioned before, its proof relies on the previous propositions.

\begin{proof}[Proof of Proposition \ref{prop-Gh}]
Let $\tr \in \taur$ be a non-internal curved element. Let $x=\ftr(\hatx) \in \tr$ where $\hatx \in \tref$. Following the equation~\eqref{eq:def-fter}, we recall that, $\ftre( \hatx) = x +~\rhotr~(\hatx)$. Then $\Ghr$ can be written as follows,
$$
    {\Ghr}_{|_{\tr}}  = \ftre \circ ({\ftr})^{-1} 
     = (\ftr + \rhotr ) \circ ({\ftr})^{-1}  
     = id_{|_{\tr}} + \rhotr \circ ({\ftr})^{-1}.
$$

Firstly, with Proposition \ref{prop:rho-tr}, $\rhotr$ is of class $\c {r+1}(\tref)$ and $\ftr$ is a polynomial, then $\Ghr$ is also $\c {r+1}(\tr).$ 

Secondly, $\ftr$ is a $\c1$-diffeomorphism and there exists a constant $c>0$ independent of $h$ such that (see \cite[page 239]{ciaravtransf}),
\begin{equation}
    \label{ineq:ftr-1}
    \| \diff (\ftr)^{-1}\| \le \frac{c}{h}.
\end{equation}
Additionally, by applying \eqref{ineq:rho_tr} and \eqref{ineq:ftr-1}, the following inequality holds, 
\begin{equation}
    \label{ineq:cdt-ciar}
    \|\diff ( \rhotr)\|_{\Linf(\tref)} \| \diff(({\ftr})^{-1})\|_{\Linf(\tr)} \le ch^{r+1}   \frac{c}{h} = c h^r < 1.    
\end{equation}
Then by applying \cite[Theorem 3]{ciaravtransf}, $\ftr+\rhotr$ is a $\c1$-diffeomorphism, being the sum of a $\c 1$-diffeomorphism and a $\c 1$ mapping, which satisfy \eqref{ineq:cdt-ciar}. Therefore, $\Ghr=(\ftr+\rhotr) \circ (\ftr)^{-1}$ is a~$\c1$-diffeomorphism. 

To obtain the first inequality of \eqref{ineq:Gh-Id_Jh-1}, we differentiate the latter expression, 
$$
     \diff {\Ghr}_{|_{\tr}} - \I_{|_{\tr}} = \diff (\rhotr \circ ({\ftr})^{-1}) = \diff (\rhotr) \circ {(({\ftr})^{-1})} \diff({\ftr})^{-1}.
$$
Using \eqref{ineq:rho_tr} and \eqref{ineq:ftr-1}, we obtain,
$$
    \| \diff {\Ghr}_{|_{\tr}} - \I_{|_{\tr}}\|_{\Linf(\tr)} \le \|\diff ( \rhotr)\|_{\Linf(\tref)} \| \diff(({\ftr})^{-1})\|_{\Linf(\tr)} 
    \le ch^r,
$$
where the constant $c>0$ is independent of $h$. 
Lastly, the second inequality of~\eqref{ineq:Gh-Id_Jh-1} comes as a consequence of the first one, by definition of a Jacobian. 
\end{proof}
%
%
%
%
%
%
%
%
%

\bibliographystyle{abbrv}
\bibliography{biblio}

\end{document}